\newtheorem{theorem}{Theorem}[section]
\newtheorem{thm}[theorem]{Theorem}
\newtheorem{lem}[theorem]{Lemma}
\newtheorem{cor}[theorem]{Corollary}
\newtheorem{exam}[theorem]{Example}
\newtheorem{introthm}{Theorem}
\theoremstyle{remark}
\numberwithin{equation}{section}
     \DeclareMathOperator{\IBr}{IBr}
 \DeclareMathOperator{\irr}{Irr}
\begin{document}

\title[Gallagher's theorem]
{A converse for a theorem of Gallagher }

\author[Xiaoyou Chen]{Xiaoyou Chen}
\address{School of Mathematics and Statistics, Henan University of Technology, Zhengzhou 450001, China}
\email{cxymathematics@hotmail.com}

\author[Mark L. Lewis]{Mark L. Lewis}
\address{Department of Mathematical Sciences, Kent State University, Kent, OH 44242, USA}
\email{lewis@math.kent.edu}

\subjclass[2010]{Primary 20C15; Secondary 20C20}

\date{\today}

\keywords{Gallagher's theorem; Brauer characters, Isaacs $\pi$-partial characters}

\begin{abstract}
Let $G$ be a finite group.  Suppose $N$ is a normal subgroup of $G$.  Recall that Gallagher's theorem states that if $\chi \in {\rm Irr} (G)$ satisfies $\chi_N$ is irreducible, then $\chi \beta$ is irreducible and distinct for all $\beta \in {\rm Irr} (G/N)$.  Furthermore, if $\theta = \chi_N$, then these are all of the irreducible constituents of $\theta^G$.  We prove that the converse of this theorem holds.  We also prove that a partial converse of the Brauer version of this theorem holds.  Finally, we prove that an analog of Gallagher's theorem holds for Isaacs' $\pi$-partial characters and that a partial converse of that theorem is true.
\end{abstract}

\maketitle

\centerline{\it In memory of I. M. Isaacs }

\section{Introduction}

In this paper, all groups are finite.  Take $G$ to be a group, $p$ to be a prime, and $\pi$ to be a set of primes. We write ${\rm Irr} (G)$ for the set of irreducible characters of $G$, ${\rm IBr} (G)$ for the set irreducible ($p$-)Brauer characters of $G$, and ${\rm I}_\pi (G)$ for the set of irreducible $\pi$-partial characters of $G$.

Gallagher's theorem \cite[Corollary 6.17]{Isaacs1976} can be stated as follows: let $N$ be a normal subgroup of $G$ and let $\chi\in {\rm Irr} (G)$ be such that $\chi_{N} = \theta \in {\rm Irr} (N)$, where $\chi_{N}$ is the restriction of $\chi$ to $N$. Then the characters $\beta\chi$ for $\beta\in {\rm Irr} (G/N)$ are irreducible and distinct for distinct $\beta$ and are all of the irreducible constituents of $\theta^{G}$.

The first goal in this paper is to prove the converse of this theorem:

\begin{introthm} \label{main1}
Let $G$ be a group and let $N$ be a normal subgroup of $G$.  Suppose ${\rm Irr} (G/N)=\{\beta_{1}, \cdots, \beta_{n}\}$ and $\chi\in {\rm Irr} (G)$, and suppose that the $\beta_{1}\chi, \cdots, \beta_{n}\chi$ are distinct and irreducible.  Then $\chi_{N}$ is an irreducible character of $N$.
\end{introthm}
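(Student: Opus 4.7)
The plan is to show directly that $[\chi_N,\chi_N]_N = 1$, using the given hypothesis together with Frobenius reciprocity. The key idea is to re-express this inner product as a sum over $\mathrm{Irr}(G/N)$ by replacing $1_N$ with the induced character $(1_N)^G$.

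First I would observe that
\[
[\chi_N,\chi_N]_N = [\chi_N \overline{\chi_N}, 1_N]_N = [(\chi\overline{\chi})_N, 1_N]_N,
\]
and then apply Frobenius reciprocity to get
\[
[\chi_N,\chi_N]_N = [\chi\overline{\chi}, (1_N)^G]_G.
\]
Next, since $N\trianglelefteq G$, the induced character $(1_N)^G$ is the inflation of the regular character of $G/N$, so
\[
(1_N)^G = \sum_{\beta\in\mathrm{Irr}(G/N)} \beta(1)\,\beta.
\]
Substituting this and using the identity $[\chi\overline{\chi},\beta]_G = [\chi,\beta\chi]_G$ (which is just a rearrangement under the sum defining the inner product) gives
\[
[\chi_N,\chi_N]_N = \sum_{\beta\in\mathrm{Irr}(G/N)} \beta(1)\,[\chi,\beta\chi]_G.
\]

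Now I would invoke the hypothesis. Each $\beta\chi$ is irreducible, and the characters $\{\beta\chi : \beta\in\mathrm{Irr}(G/N)\}$ are pairwise distinct; since $1_{G/N}\cdot\chi = \chi$, this forces $[\chi,\beta\chi]_G$ to equal $1$ when $\beta = 1_{G/N}$ and $0$ otherwise. Therefore the sum collapses to $1_{G/N}(1) = 1$, so $[\chi_N,\chi_N]_N = 1$ and $\chi_N$ is irreducible.

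There really is no serious obstacle here; the whole argument is essentially a single application of Frobenius reciprocity, packaged with the structure of $(1_N)^G$ as the regular character of $G/N$. The only point requiring mild care is checking that the distinctness hypothesis is precisely what is needed to kill the $\beta\neq 1_{G/N}$ terms, and this is immediate from the fact that $\chi$ itself appears in the list $\{\beta\chi\}$ as the $\beta=1_{G/N}$ entry.
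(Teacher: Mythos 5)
Your proof is correct, and it takes a genuinely different route from the paper. You argue directly with the ordinary inner product: writing $[\chi_N,\chi_N]_N = [\chi\overline\chi,(1_N)^G]_G$, using that $(1_N)^G$ is the inflation of the regular character of $G/N$, and then observing that the hypothesis (each $\beta\chi$ irreducible and distinct from $\chi=1_{G/N}\chi$ for $\beta\neq 1_{G/N}$) kills every term $[\chi,\beta\chi]_G$ except the one for $\beta=1_{G/N}$, so $[\chi_N,\chi_N]_N=1$. Each step checks out, including the identities $[\chi_N,\chi_N]_N=[(\chi\overline\chi)_N,1_N]_N$ and $[\chi\overline\chi,\beta]_G=[\chi,\beta\chi]_G$, and the distinctness hypothesis is used exactly where needed. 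The paper instead deduces Theorem~\ref{main1} from the Brauer-character statement Theorem~\ref{main2}: it picks a prime $p\nmid|G|$ so that ${\rm Irr}(G)={\rm IBr}(G)$, and the proof of Theorem~\ref{main2} runs through Clifford's theorem ($\eta_N=e\sum_i\theta_i$), Nakayama reciprocity to get the multiplicities $\beta(1)e$ of the $\beta\eta$ in $\theta^G$, and a degree count forcing $e^2t\le 1$, hence $e=t=1$. Your argument is shorter and more elementary for the ordinary case, but it leans on the Hermitian inner product and complex conjugation, so it does not transfer to the modular setting; the paper's counting argument is what lets it prove the Brauer version (with $p\nmid|G/N|$) and obtain the ordinary case as a corollary.
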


We will see that we will obtain Theorem \ref{main1} as a corollary to the version for the Brauer character version of the theorem.  The Brauer character version of Gallagher's theorem \cite[Corollary 8.20]{Navarro1998} is as follows: 
Let $N\lhd G$ and let $\eta\in \IBr (G)$. If $\eta_{N} = \theta \in \IBr (N)$, then the characters $\beta\eta$ for $\beta\in \IBr(G/N)$ are irreducible, distinct for distinct $\beta$ and are all of the irreducible constituents of $\theta^{G}$.  Now we consider a partial converse of the Brauer character version of Gallagher's theorem.

\begin{introthm} \label{main2}
Let $G$ be a group, let $N$ be a normal subgroup of $G$, and let $p$ be a prime.  Suppose $\eta\in {\rm IBr}(G)$ and ${\rm IBr}(G/N)=\{\beta_{1}, \cdots, \beta_{n}\}$, and suppose that $p\nmid |G/N|$ and the $\beta_{1}\eta, \cdots, \beta_{n}\eta$ are distinct and irreducible. Then $\eta_{N}$ is an irreducible Brauer character of $N$.
\end{introthm}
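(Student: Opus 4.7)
The plan is a short multiplicity count. Write $\eta_N = \sum_{j=1}^{r} a_j \phi_j$ for distinct $\phi_j \in \IBr(N)$ with positive integer multiplicities $a_j$; the goal is to show $\sum_j a_j^2 \leq 1$, which forces $r = 1$ and $a_1 = 1$, whence $\eta_N \in \IBr(N)$.

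The central step is to compute, for each $i$, the multiplicity of $\beta_i\eta$ in the induced Brauer character $(\eta_N)^G$. Since $\beta_i$ is a Brauer character of $G/N$, its restriction to $N$ equals $\beta_i(1)\cdot 1_N$, so $(\beta_i\eta)_N = \beta_i(1)\,\eta_N$. Frobenius reciprocity for Brauer characters then yields
\[
[(\eta_N)^G,\,\beta_i\eta] \;=\; [\eta_N,\,(\beta_i\eta)_N] \;=\; \beta_i(1)\,[\eta_N,\eta_N] \;=\; \beta_i(1)\sum_{j=1}^{r} a_j^2,
\]
where $[\,\cdot\,,\,\cdot\,]$ denotes multiplicity relative to the $\IBr$-basis.

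Next, by hypothesis the characters $\beta_1\eta,\dots,\beta_n\eta$ are pairwise distinct irreducible Brauer characters of $G$, so summing their degree contributions to $(\eta_N)^G$ gives
\[
\Bigl(\sum_{j=1}^{r} a_j^2\Bigr)\Bigl(\sum_{i=1}^{n}\beta_i(1)^2\Bigr)\eta(1) \;\leq\; (\eta_N)^G(1) \;=\; [G:N]\,\eta(1).
\]
Finally, the hypothesis $p\nmid |G/N|$ forces $\IBr(G/N)=\Irr(G/N)$, so $\sum_{i=1}^{n}\beta_i(1)^2 = |G/N| = [G:N]$. Cancelling yields $\sum_j a_j^2 \leq 1$, and since $\eta_N \neq 0$ we must have equality, so $\eta_N \in \IBr(N)$.

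The proof is essentially a single calculation, and there is no serious obstacle. The small subtleties to watch are (i) interpreting the bracket $[\,\cdot\,,\,\cdot\,]$ correctly for Brauer characters as multiplicity rather than as an honest inner product, and (ii) locating where the hypothesis $p\nmid |G/N|$ is used. It is needed exactly once, to produce the tight identity $\sum_i\beta_i(1)^2 = |G/N|$; without it one only obtains $\sum_i\beta_i(1)^2 < |G/N|$ and the last step collapses. Theorem~\ref{main1} will then follow as a corollary of Theorem~\ref{main2} by choosing any prime $p$ not dividing $|G|$, so that $\IBr(G)=\Irr(G)$ and the Brauer version reduces to the ordinary one.
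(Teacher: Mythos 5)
Your overall strategy --- computing the multiplicity of each $\beta_i\eta$ in an induced Brauer character and then comparing degree contributions against $(\eta_N)^G(1)=|G:N|\,\eta(1)$ --- is the same as the paper's proof; the paper merely organizes the count via Clifford's theorem, fixing one constituent $\theta$ of $\eta_N$ and working inside $\theta^G$, while you work inside $(\eta_N)^G$ directly. However, there is a genuine gap at your central step: ``Frobenius reciprocity for Brauer characters'' is not a valid general principle. For Brauer characters the bracket counts composition factors, and induction and restriction are not adjoint at that level: already for $N=1$, $G=C_2$, $p=2$ one has $(1_N)^G=2\cdot 1_G$ as Brauer characters, so $[(1_N)^G,1_G]=2$ while $[(1_G)_N,1_N]=1$. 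Hence the identity $[(\eta_N)^G,\beta_i\eta]=[\eta_N,(\beta_i\eta)_N]$ requires justification, and this is precisely the second place where the hypothesis $p\nmid|G/N|$ enters. Your explicit claim that the hypothesis is needed ``exactly once,'' namely for $\sum_i\beta_i(1)^2=|G/N|$, is therefore incorrect: the paper cites Nakayama's lemma \emph{together with} $p\nmid|G/N|$ exactly to obtain the multiplicity identity ${\bf I}(\beta\eta,\theta^G)={\bf I}((\beta\eta)_N,\theta)$.

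The repair is the one the paper makes. Since $N\trianglelefteq G$ and $p\nmid|G:N|$, induction from $N$ to $G$ of a semisimple module is semisimple (equivalently, invoke Nakayama reciprocity, Lemma 8.4 of Navarro's book --- the paper's own reference --- in combination with the coprime index), so for $\phi\in\IBr(N)$ and $\psi\in\IBr(G)$ the multiplicity of $\psi$ in $\phi^G$ equals the multiplicity of $\phi$ in $\psi_N$. Writing $\eta_N=\sum_j a_j\phi_j$ and applying this to each $\phi_j$ recovers your formula $[(\eta_N)^G,\beta_i\eta]=\beta_i(1)\sum_j a_j^2$, after which your degree count and the conclusion $\sum_j a_j^2\le 1$ go through verbatim. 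With that justification inserted, your argument is correct and is essentially the paper's argument in a slightly different bookkeeping.
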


We next consider Isaacs' $\pi$-theory, where $\pi$ is a set of primes.  For a $\pi$-separable group $G$, Isaacs has defined an analog of Brauer characters that are defined on the $\pi$-elements of $G$ that are called the $\pi$-partial characters of $G$.  We will write ${\rm I}_\pi (G)$ for the set of irreducible $\pi$-partial characters of $G$.  We will review the points of $\pi$-theory we need in Sections \ref{secn: pi theory} and \ref{secn: nucl}.  Interestingly, while Isaacs proved analogs of many other results for $\pi$-partial characters, he does not seem to have proved an analog of Gallagher's theorem.  We do that next.

\begin{introthm}\label{ipi Gall,gen}
Let $\pi$ be a set of primes, let $G$ be a $\pi$-separable group, and let $N$ be a normal subgroup of $G$.  Suppose there exists a character $\zeta \in {\rm I}_{\pi} (G)$ so that $\xi = \zeta_N \in {\rm I}_\pi (N)$.  Then the map $\kappa \mapsto \kappa\zeta$ is a bijection from ${\rm I}_{\pi} (G/N)$ to ${\rm I}_\pi (G \mid \xi)$.
\end{introthm}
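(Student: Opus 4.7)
The plan is to mirror the proof of the ordinary Gallagher theorem, translated into the $\pi$-partial character setting, by verifying three claims: that $\kappa \mapsto \kappa\zeta$ lands in ${\rm I}_\pi(G\mid\xi)$, that it is injective, and that it is surjective. For the first, I would inflate $\kappa \in {\rm I}_\pi(G/N)$ to an irreducible $\pi$-partial character of $G$ (legitimate since $G/N$ is $\pi$-separable), form the product $\kappa\zeta$, which is a $\pi$-partial character of $G$, and compute $(\kappa\zeta)_N = \kappa_N \zeta_N = \kappa(1)\xi$. Every irreducible constituent of $\kappa\zeta$ then lies over $\xi$, placing the image in the correct piece of ${\rm I}_\pi(G)$.

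The crux is showing that $\kappa\zeta$ is itself irreducible. I would invoke Isaacs' Clifford theory for $\pi$-partial characters: since $\xi$ is $G$-invariant and in fact extends to $\zeta$, the factor set attached to the Clifford correspondence for ${\rm I}_\pi(G\mid\xi)$ is trivial, so this set is a principal homogeneous space under multiplication by ${\rm I}_\pi(G/N)$, with $\zeta$ as the distinguished base point. If a more hands-on argument is preferable, I would use the nucleus machinery developed in Section~\ref{secn: nucl}: write $\zeta = \gamma^G$ for a linear $B_\pi$-character $\gamma$ on a nucleus $W \le G$, and reduce $\kappa\zeta$ to an induction from $W$ of a product of $\gamma$ with a pullback of $\kappa$. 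Linearity of $\gamma$ makes products behave cleanly, and a Mackey-type analysis of the induction should yield irreducibility.

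Injectivity then follows from linear independence of the irreducible $\pi$-partial characters of $G$: once each $\kappa\zeta$ is known irreducible, distinct $\kappa$ must give distinct $\kappa\zeta$. For surjectivity, given $\mu \in {\rm I}_\pi(G\mid\xi)$, I would construct $\kappa \in {\rm I}_\pi(G/N)$ with $\kappa\zeta = \mu$ using the Clifford parametrization: extendibility of $\xi$ forces the ``ratio'' of $\mu$ and $\zeta$ to descend to a well-defined irreducible $\pi$-partial character of $G/N$. The main obstacle I anticipate is the irreducibility step, since $\pi$-partial characters lack a clean orthogonality on all of $G$; one must import either the Clifford/projective-representation apparatus for ${\rm I}_\pi$ or the nucleus reduction. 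I would pursue the nucleus approach first, as Section~\ref{secn: nucl} signals that this is the machinery the authors have prepared for precisely this sort of task.
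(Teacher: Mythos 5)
Your outline has the right overall shape (well-definedness, injectivity, surjectivity), but the step that carries the entire theorem---irreducibility of $\kappa\zeta$, and with it surjectivity---is not actually delivered by either route you propose. The first route is circular: the assertion that, because $\xi$ extends to $\zeta$, the ``factor set is trivial'' and ${\rm I}_\pi (G \mid \xi)$ is a principal homogeneous space under multiplication by ${\rm I}_\pi (G/N)$ \emph{is} the statement being proved; there is no Clifford/projective-representation apparatus of that strength for $\pi$-partial characters to quote (the paper's starting point is precisely that Isaacs never proved a Gallagher analog for ${\rm I}_\pi$). Note also that even after lifting $\kappa$ and $\zeta$ to ordinary characters and applying ordinary Gallagher, irreducibility of the ordinary product does not give irreducibility of its restriction to $\pi$-elements; one must know the product lies in a distinguished set of lifts. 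Your injectivity step has a related soft spot: linear independence of ${\rm I}_\pi (G)$ does not allow you to cancel $\zeta$ from $\kappa_1 \zeta = \kappa_2 \zeta$, and your surjectivity step (``the ratio of $\mu$ and $\zeta$ descends to $G/N$'') invokes an operation that does not exist for partial characters.

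The second, nucleus-based route is the correct idea but is misstated and left unexecuted: the nucleus character attached to $\chi \in {\rm B}_\pi (G : {\mathcal N})$ is $\pi$-special, not linear, and $\zeta$ is not in general induced from a linear character, so ``linearity of $\gamma$'' is not available, and the promised Mackey analysis is never carried out. What actually closes the gap (and is the paper's argument) is: fix a normal series ${\mathcal N}$ through $N$ with $\pi$- and $\pi'$-factors, lift $\zeta$ to $\chi \in {\rm B}_\pi (G : {\mathcal N})$ (so that $\nu = \chi_N$ is irreducible) and $\kappa$ to $\gamma \in {\rm B}_\pi (G/N : {\mathcal N})$, apply ordinary Gallagher to get $\gamma\chi \in \irr (G)$, and then compute the ${\mathcal N}$-nucleus of $\gamma\chi$ explicitly: Lemmas \ref{ext stab}, \ref{ext nucl} and \ref{prod nucl} identify it as $(U \cap V, \alpha\beta)$ built from the nuclei of $\chi$ and $\gamma$, and Lemma \ref{pi spec prod} shows the relevant factor is $\pi$-special, so that $\gamma\chi \in {\rm B}_\pi (G : {\mathcal N})$ if and only if $\gamma \in {\rm B}_\pi (G/N : {\mathcal N})$ (Lemma \ref{bij bpi nucl}). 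Restriction to $\pi$-elements, which is a bijection from these lift sets onto ${\rm I}_\pi (G/N)$ and ${\rm I}_\pi (G \mid \xi)$ compatible with constituents over $N$ (Theorem B of \cite{nucl} together with Lemma \ref{normal lifts}), then yields irreducibility, injectivity and surjectivity all at once. Until you supply this (or an equivalent) argument that the product of canonical lifts is again a canonical lift, the proposal does not prove the theorem.
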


Now, in \cite{Isaacs1976}, there is a generalization of Gallagher's theorem (Theorem 6.16 of \cite{Isaacs1976}) of which Gallaher's theorem is obtained as a corollary.  We have not been able to obtain an analog of this generalization in cases, but we can obtain an analog when we add the extra assumption that $2 \in \pi$ or $|G|$ is odd.

\begin{introthm} \label{ipi pre,2inpi,|G|odd}
Let $\pi$ be a set of primes, let $G$ be a $\pi$-separable group, and let $N$ be a normal subgroup of $G$.  Suppose $2 \in \pi$ or  $|G|$ is odd.  Assume there exist partial characters $\eta, \xi \in {\rm I}_\pi (N)$ so that $\eta$ is $G$-invariant, $\xi$ extends to $\zeta \in {\rm I}_\pi (G)$ and $\eta \xi \in {\rm I}_{\pi} (N)$.  Then the map $\kappa \mapsto \kappa \zeta$ is a bijection from ${\rm I}_\pi (G \mid \eta) \rightarrow {\rm I}_\pi (G \mid \eta \xi)$.
\end{introthm}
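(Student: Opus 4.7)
The plan is to model the argument on Isaacs' proof of the ordinary-character Theorem~6.16 of \cite{Isaacs1976}, using the just-established Theorem~\ref{ipi Gall,gen} as the $\pi$-partial replacement for Gallagher's theorem. The extra hypothesis that $2 \in \pi$ or $|G|$ is odd will be crucial.

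First I would check that $\kappa \mapsto \kappa\zeta$ sends $I_\pi(G \mid \eta)$ into $I_\pi(G \mid \eta\xi)$. Since $\eta$ is $G$-invariant, the $\pi$-Clifford theorem of Isaacs forces $\kappa_N = e\eta$ for some positive integer $e$, so $(\kappa\zeta)_N = e(\eta\xi)$; as $\eta\xi \in I_\pi(N)$, every irreducible $\pi$-partial constituent of $\kappa\zeta$ must lie in $I_\pi(G \mid \eta\xi)$.

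The main work is then to show that $\kappa\zeta$ is itself irreducible and that the map is bijective. For irreducibility, the idea is to apply Theorem~\ref{ipi Gall,gen} to $\zeta$ to obtain the bijection $\beta \mapsto \beta\zeta$ from $I_\pi(G/N)$ onto $I_\pi(G \mid \xi)$, and to combine this with the ramification equation $\kappa(1)\zeta(1) = e(\eta\xi)(1)$ to force a single irreducible constituent above $\eta\xi$ to account for all of $\kappa\zeta$. For bijectivity I would construct the inverse as $\mu \mapsto \mu\bar\zeta$, suitably decomposed: injectivity comes from the fact that $\bar\zeta$ is an irreducible $\pi$-partial character extending $\bar\xi$ and from the Gallagher bijection it induces, while surjectivity will either follow by identifying the unique irreducible constituent of $\mu\bar\zeta$ that lies over $\eta$, or by a counting argument equating $|I_\pi(G \mid \eta)|$ with $|I_\pi(G \mid \eta\xi)|$.

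The main obstacle is the irreducibility of $\kappa\zeta$: unlike in the ordinary case, $\pi$-partial characters do not carry an honest inner product, so one cannot simply compute $\langle \kappa\zeta, \kappa\zeta\rangle_G$ and appeal to orthogonality. The role of the hypothesis $2 \in \pi$ or $|G|$ odd is to supply the missing structure, most likely through Isaacs' canonical $B_\pi$-lift or the nucleus machinery of Section~\ref{secn: nucl}: under this hypothesis the relevant $\pi$-partial characters lift compatibly to ordinary characters, or equivalently the nuclei of $\eta$, $\xi$, and $\eta\xi$ can be chosen consistently, and the irreducibility statement can then be transferred from Isaacs' ordinary Theorem~6.16. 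I expect the technical heart of the proof to be verifying this compatibility of lifts (or nuclei) at the product $\eta\xi$, which is precisely where the parity hypothesis on $\pi$ and $|G|$ is forced into the argument.
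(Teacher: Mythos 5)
Your final paragraph guesses the right overall shape of the argument (lift to canonical lifts, transfer Isaacs' Theorem 6.16, with the hypothesis $2 \in \pi$ or $|G|$ odd entering through the behaviour of those lifts), but that step is exactly what your proposal leaves unproved, and the concrete devices you offer in its place do not deliver it. Theorem \ref{ipi Gall,gen} applied to $\zeta$ only gives the bijection $\beta \mapsto \beta\zeta$ from ${\rm I}_\pi (G/N)$ onto ${\rm I}_\pi (G \mid \xi)$; a character $\kappa \in {\rm I}_\pi (G \mid \eta)$ with $\eta \neq 1$ is not a character of $G/N$, so that bijection says nothing about $\kappa\zeta$, and the degree identity $\kappa(1)\zeta(1) = e\,\eta(1)\xi(1)$ cannot rule out $\kappa\zeta$ decomposing as a sum of several members of ${\rm I}_\pi (G \mid \eta\xi)$ with the correct total degree. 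Likewise the proposed inverse $\mu \mapsto \mu\bar\zeta$ cannot be exploited: with no inner product there is no way to isolate the constituent of $\mu\bar\zeta$ lying over $\eta$, and even in the ordinary-character proof of Theorem 6.16 surjectivity comes from an inner-product count, not from such an inverse. Note also that your proposed mechanism makes no use of the parity hypothesis; if it worked, it would prove the generalized statement for arbitrary $\pi$, which the authors explicitly state they were unable to do. So the irreducibility of $\kappa\zeta$, its membership in ${\rm I}_\pi (G \mid \eta\xi)$ as an irreducible character, and surjectivity of the map are genuine gaps as written.

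The missing content is supplied in the paper by a ${\rm B}_\pi$-level version of the generalized Gallagher theorem, Lemma \ref{bpi-pre Gall,2inpi,|G|odd}: if $\varphi, \theta \in {\rm B}_\pi (N)$ with $\varphi$ $G$-invariant, $\theta$ extending to $\chi \in {\rm B}_\pi (G)$ and $\varphi\theta \in {\rm B}_\pi (N)$, then $\gamma \mapsto \gamma\chi$ is a bijection ${\rm B}_\pi (G \mid \varphi) \rightarrow {\rm B}_\pi (G \mid \varphi\theta)$. Its proof starts from the ordinary bijection of Theorem 6.16 of \cite{Isaacs1976} and then argues by induction on $|G|$ through a maximal normal subgroup $M \ge N$: when $G/M$ is a $\pi$-group, or when the relevant constituent of $\gamma_M$ induces irreducibly, membership in ${\rm B}_\pi (G)$ follows from Theorems 4.14 and 4.19 of \cite{Isaacs2018}; in the remaining case $G/M$ is a $\pi'$-group of prime order (here $2 \notin \pi'$ or $|G|$ odd is used), one checks $(\gamma\chi)^o$ is irreducible and $\gamma\chi$ is fixed by the magic field automorphism, so Theorem 5.2 of \cite{Isaacs2018} puts $\gamma\chi$ in ${\rm B}_\pi (G)$, and the same criterion gives surjectivity. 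This is precisely the ``compatibility of lifts at the product'' you flagged as the technical heart. Granting that lemma, the deduction of Theorem \ref{ipi pre,2inpi,|G|odd} is the routine step you sketched: choose ${\rm B}_\pi$-lifts $\gamma, \theta, \chi$ of $\eta, \xi, \zeta$, observe that $\gamma$ is $G$-invariant, that $\chi_N = \theta$, and that $\gamma\theta \in {\rm B}_\pi (N)$ (again by the magic-field criterion, since $(\gamma\theta)^o = \eta\xi$ is irreducible), and then compose the lemma's bijection with the restriction bijections ${\rm B}_\pi (G \mid \gamma) \rightarrow {\rm I}_\pi (G \mid \eta)$ and ${\rm B}_\pi (G \mid \gamma\theta) \rightarrow {\rm I}_\pi (G \mid \eta\xi)$ furnished by Lemma \ref{normal lifts}.
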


We close with a partial converse for the analog of Gallagher's theorem for $\pi$-partial characters.  We will show that it is not possible to have a full converse in this case.

\begin{introthm}\label{ipi conv}
Let $\pi$ be a set of primes, let $G$ be a $\pi$-separable group, and let $N$ be a normal subgroup of $G$.
Suppose there exists partial characters $\zeta \in {\rm I}_{\pi} (G)$ and ${\rm I}_{\pi} (G/N) = \{\kappa_1, \dots, \kappa_n \}$.  Assume that $G/N$ is a $\pi$-group and the $\kappa_1 \zeta, \dots, \kappa_n \zeta $ are distinct and irreducible  Then $\zeta_N$ is irreducible.
\end{introthm}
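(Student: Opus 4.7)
The plan is to mirror the degree-counting strategy that drives the converses for ordinary and Brauer characters (Theorems~\ref{main1} and~\ref{main2}). First, apply Clifford theory for $\pi$-partial characters to decompose $\zeta_N = e(\xi_1 + \cdots + \xi_t)$, where $\{\xi_1, \dots, \xi_t\}$ is the $G$-orbit of $\xi_1$ in ${\rm I}_\pi(N)$, $e$ is the common multiplicity, and $t = [G : I_G(\xi_1)]$. The target is then $e = t = 1$, which forces $\zeta_N = \xi_1 \in {\rm I}_\pi(N)$.

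Each $\kappa_i\zeta$ is irreducible with $(\kappa_i\zeta)_N = \kappa_i(1)\zeta_N$, so it lies in ${\rm I}_\pi(G \mid \xi_1)$. Because $G/N$ is a $\pi$-group, ${\rm I}_\pi(G/N) = {\rm Irr}(G/N)$ and the second orthogonality relation yields $\sum_i \kappa_i(1)^2 = |G/N|$, so
\[
\sum_{i=1}^n (\kappa_i\zeta)(1)^2 \;=\; |G/N|\,e^2 t^2 \xi_1(1)^2.
\]

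For the matching upper bound, Frobenius reciprocity and Clifford theory for ${\rm I}_\pi$ give
\[
\xi_1^G \;=\; \sum_{\psi \in {\rm I}_\pi(G \mid \xi_1)} a_\psi\,\psi,
\]
where $\psi_N = a_\psi(\xi_1 + \cdots + \xi_t)$ and $\psi(1) = t a_\psi \xi_1(1)$. Evaluating at $1$ and using $(\xi_1^G)(1) = [G:N]\xi_1(1)$ forces $\sum_\psi a_\psi^2 = [G:N]/t$, whence
\[
\sum_{\psi \in {\rm I}_\pi(G \mid \xi_1)} \psi(1)^2 \;=\; t[G:N]\xi_1(1)^2.
\]
The $n$ distinct characters $\kappa_i\zeta$ form a subset of ${\rm I}_\pi(G \mid \xi_1)$, so combining the two displays yields $|G/N|\,e^2 t^2 \le t[G:N]$, i.e.\ $e^2 t \le 1$. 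This forces $e = t = 1$, completing the proof.

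The main obstacle is not the final algebra but checking that each classical ingredient transfers faithfully to Isaacs' $\pi$-theory: the Clifford decomposition and Clifford correspondence for ${\rm I}_\pi$, Frobenius reciprocity paired with the induction-degree formula $(\xi^G)(1) = [G:N]\xi(1)$, and the orthogonality identity $\sum_{\kappa \in {\rm I}_\pi(G/N)} \kappa(1)^2 = |G/N|$ when $G/N$ is a $\pi$-group. All three should be available from the background reviewed in Sections~\ref{secn: pi theory} and~\ref{secn: nucl}, so the essential content lies in packaging them into the degree inequality above.
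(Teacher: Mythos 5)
Your counting skeleton is the correct ${\rm I}_\pi$-analog of the proof of Theorem \ref{main2}, and the final algebra ($e^2t\le 1$, hence $e=t=1$) is fine; note that this is a genuinely different route from the paper, which instead lifts everything to ${\rm B}_\pi$-characters and quotes Theorem \ref{main1}.  However, there is a genuine gap at your decisive step: you invoke ``Frobenius reciprocity for ${\rm I}_\pi$'' to write $\xi_1^G=\sum_{\psi\in {\rm I}_\pi(G\mid\xi_1)}a_\psi\psi$ with $a_\psi$ equal to the multiplicity of $\xi_1$ in $\psi_N$, treating this as a classical ingredient that transfers automatically.  It does not: reciprocity with multiplicities is false for $\pi$-partial characters in general, exactly as it is false for Brauer characters (which is why the proof of Theorem \ref{main2} needs Nakayama's lemma together with the hypothesis $p\nmid|G/N|$).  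A concrete failure: let $G$ be a $q$-group with $q\notin\pi$ and $N<G$ normal, and let $\xi_1$ be the principal $\pi$-partial character of $N$; then $t=1$, the only member of ${\rm I}_\pi(G\mid\xi_1)$ is the principal partial character $\psi$ of $G$, and $a_\psi=1$, yet $\xi_1^G=[G:N]\,\psi$, so your displayed identity $\sum_\psi\psi(1)^2=t\,[G:N]\,\xi_1(1)^2$ reads $1=[G:N]$.  Thus the reciprocity you need is precisely where the hypothesis that $G/N$ is a $\pi$-group must enter, whereas your argument uses that hypothesis only for the orthogonality identity $\sum_i\kappa_i(1)^2=|G/N|$; as written, the key step is asserted rather than proved, and it is not part of the background the paper reviews.

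The step can be repaired, but only by importing the lifting machinery: with $G/N$ a $\pi$-group, take $\theta\in{\rm B}_\pi(N)$ with $\theta^o=\xi_1$; every irreducible constituent of $\theta^G$ lies in ${\rm B}_\pi(G)$ (Theorem 4.14 of \cite{Isaacs2018}), the constituents of $\chi_N$ for $\chi\in{\rm B}_\pi(G)$ lie in ${\rm B}_\pi(N)$ (Corollary 4.21(a)), and $\chi\mapsto\chi^o$ is a bijection from ${\rm B}_\pi$ onto ${\rm I}_\pi$ for both $G$ and $N$; combining these with ordinary Frobenius reciprocity applied to $\theta$ and these lifts yields exactly your $a_\psi$ identity.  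Once you are using this much of the ${\rm B}_\pi$-theory, it is shorter to lift the whole problem, which is what the paper does: choose $\chi,\beta_1,\dots,\beta_n\in{\rm B}_\pi(G)$ with $\chi^o=\zeta$ and $\beta_i^o=\kappa_i$, observe that $(\beta_i\chi)^o=\kappa_i\zeta$ forces the $\beta_i\chi$ to be distinct and irreducible ordinary characters (and $\{\kappa_1,\dots,\kappa_n\}=\irr(G/N)$ since $G/N$ is a $\pi$-group), apply Theorem \ref{main1} to get $\chi_N$ irreducible, and then restrict to conclude that $\zeta_N=(\chi_N)^o$ is irreducible.  So your plan is salvageable and gives an alternative proof in the spirit of Theorem \ref{main2}, but the reciprocity step must be justified along the lines above rather than waved through as classical.
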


\section{Brauer Characters and ordinary characters}

In this section, we prove Theorems \ref{main1} and \ref{main2}.  We begin with the Brauer character version which is Theorem \ref{main2}.

\begin{proof}[Proof of Theorem \ref{main2}]
Let 
$\theta\in {\rm IBr}(N)$ be an irreducible constituent of $\eta_{N}$, where $\eta_{N}$ is the restriction of $\eta$ to the set of $p$-regular elements of $N$. Then by Clifford's theorem \cite[Corollary 8.7]{Navarro1998} we have that $\eta_{N}=e\sum_{i=1}^{t}\theta_{i}$, where $\theta_{1}=\theta$, $I_{G}(\theta)=\{g\in G\mid \theta^{g}=\theta\}$ is the inertia subgroup of $\theta$ in $G$, $t=|G: I_{G}(\theta)|$, and $e$ is a positive integer. And so $\eta(1)=et\theta(1)$.

Now let us consider the irreducible constituents of $\theta^{G}$. By Nakayama's theorem \cite[Lemma 8.4]{Navarro1998}, and since $p \nmid |G/N|$, we have that
$${\bf I}(\beta\eta,\theta^{G})={\bf I}((\beta\eta)_{N},\theta) ={\bf I}(\beta(1)\eta_{N}, \theta)=\beta(1){\bf I}(\eta_{N}, \theta)=\beta(1)e,$$
where $\beta\in {\rm IBr}(G/N)$ and ${\bf I}(\beta\eta,\theta^{G})$ is the multiplicity of $\beta\eta$ in $\theta^{G}$.  Thus, all the distinct $\beta\eta$ appear with multiplicity $\beta(1)e$ as irreducible constituents of $\theta^{G}$.

Therefore,
\begin{eqnarray*}
\theta^{G}(1)=\theta(1)|G: N|
&\geq& \sum_{\beta}\beta(1)e(\beta\eta)(1)\\
&=&e\eta(1)\sum_{\beta}\beta(1)^{2} =e\eta(1)|G:
N|=e^{2}t\theta(1)|G: N|.
\end{eqnarray*}
It follows that $e^{2}t\leq 1$, meaning $e=1=t$. Then $\eta_{N}=\theta$ and the proof is complete.
\end{proof}

We now obtain the ordinary character version, which is Theorem \ref{main1}, as a corollary to Theorem \ref{main2}.  The point here is that the irreducible characters of $G$ can be thought of Brauer characters for a prime $p$ that does not divide $|G|$.

\begin{proof}[Proof of Theorem \ref{main1}]
Take $p$ to be a prime so that $p\nmid |G|$. Then we know that ${\rm Irr} (G) = {\rm IBr} (G)$.  Since $p$ does not divide $|G:N|$, we have the result by applying Theorem \ref{main2} to $\chi$ viewed in ${\rm IBr} (G)$.
\end{proof}

The following example shows that the condition $p\nmid |G/N|$ cannot
be dropped from the hypotheses of Theorem \ref{main2}.

\begin{exam} \rm
Let $G=S_{4}$ and $N=K_{4}$ the Klein group, $p=3$. Then $G/N \cong S_{3}$ and ${\rm IBr} (S_{3}) = \{1, \lambda \mid \lambda(1) = 1 \}$. Let $\eta \in {\rm IBr}(G)$ with $\eta(1)=3$. Then $\{\eta, \eta\lambda\}$ are distinct and irreducible. However, $\eta_{N}$ is reducible.
\end{exam}

\section{$\pi$-theory} \label{secn: pi theory}

In the following three sections, we consider the $\pi$-theory version of Gallagher's theorem and its converse.  Isaacs introduces ``$\pi$-theory'' as a way of generalizing Brauer characters from a single prime $p$ to a set of primes $\pi$.  Unfortunately, Isaacs' $\pi$-theory is only defined for $\pi$-separable groups as opposed to Brauer characters which are defined for all groups.

The $\pi$-partial characters are first defined in \cite{pisep}, but many of the most important definitions and results have been included in the recent monograph \cite{Isaacs2018}.  Over the years, Isaacs also wrote a number of expository papers that present the $\pi$-theory results, and we recommend that the reader consult them.  They are \cite{arc}, \cite{pipart}, \cite{aust}, and \cite{nato}.  In reading those papers, one will see that Isaacs has proved for $\pi$-partial characters analogs of many of the results that have been proved for Brauer characters, especially Brauer characters of solvable or $p$-solvable groups.  In particular, many of the results for ordinary characters that have analogs for Brauer character; Isaacs proves that there are analogs for $\pi$-partial characters.  Interestingly, even though there is a Brauer analog for Gallagher's theorem, Isaacs does not have an analog of Gallagher's theorem for $\pi$-partial characters.

In \cite{Isaacs1976}, there is an extension of Gallagher's theorem, and in Section \ref{secn: galla}, we will show that we can prove an analog of this extension for $\pi$-partial characters when $2 \in \pi$ or $|G|$ is odd, and then obtain an analog of Gallagher's theorem in this case.  However, the technique used to prove this generalization does not seem to work in the general case.  In Section \ref{secn: gen}, we will use a different technique to prove an analog of Gallagher's theorem for $\pi$-partial characters for all sets of primes $\pi$ and all $\pi$-separable groups.

To define the $\pi$-partial characters of $G$, we are going to follow the approach in \cite{pipart}.  We take $\pi$ to be a set of primes and $G$ to be a $\pi$-separable group.  We set $G^o$ to be the set of $\pi$-elements of $G$.  When $\chi$ is a character of $G$, we define $\chi^o$ to be the restriction of $\chi$ to $G^o$.  We define the set of $\pi$-partial characters of $G$ to be the set of restrictions of characters of $G$ to $G^o$.  It is not difficult to see that the set of $\pi$-partial characters of $G$ are closed under sums.  Define ${\rm I}_\pi (G)$ to be those $\pi$-partial characters that cannot be written as a sum of two other $\pi$-partial characters.  We say that these are the {\it irreducible $\pi$-partial characters} of $G$.  Isaacs shows that ${\rm I}_\pi (G)$ forms a basis for the vector space of complex valued class functions defined on $G^o$.

Given an irreducible $\pi$-partial character $\phi \in {\rm I}_\pi (G)$, we say that an irreducible character $\chi \in \irr (G)$ is a {\it lift} of $\phi$ if $\chi^o = \phi$.   By the way we have defined ${\rm I}_\pi (G)$, it is not difficult to see that every partial character in ${\rm I}_\pi (G)$ necessarily has a lift.  Now, Isaacs has produced a ``canonical'' set of lifts and developed a number of properties of this canonical set of lifts, and using these properties, he proves many of the properties of the $\pi$-partial characters.  He calls this canonical set of lifts the ${\rm B}_\pi$-characters.  We note that other sets of lifts have been constructed in  \cite{dpi}, \cite{nav}, \cite{nucl}, and \cite{cos}.  In fact, we will use the set of lifts in \cite{nucl} to prove the general version of our analog of Gallagher's theorem, and we will discuss those lifts in Section \ref{secn: nucl}.

The ${\rm B}_\pi$-characters are initially defined in \cite{pisep} and one can find the definition and main results in \cite{Isaacs2018}.  Due to the complications of the definition, we are not going to state it here.  We will just state some of the main properties that we need, and give references to the proofs.  All of the results in this paragraph are initially proved in \cite{pisep}, but we will give references for the proofs in \cite{Isaacs2018}. For a $\pi$-separable group $G$, the set ${\rm B}_{\pi} (G)$ will be a subset of $\irr (G)$.  The map $\chi \mapsto \chi^o$ will be a bijection from ${\rm B}_\pi (G) \rightarrow {\rm I}_\pi (G)$ (Theorem 5.1 of \cite{Isaacs2018}).  If $N$ is a normal subgroup of $G$ and $\chi \in {\rm B}_\pi (G)$, then the irreducible constituents of $\chi_N$ lie in ${\rm B}_\pi (N)$ (Corollary 4.21 (a) of \cite{Isaacs2018}).  If $N$ is a normal subgroup so that $G/N$ is a $\pi$-group and $\theta \in {\rm B}_\pi (N)$, then all of the irreducible constituents of $\theta^G$ lie in ${\rm B}_\pi (G)$ (Theorem 4.14 of \cite{Isaacs2018}).  When $G/N$ is a $\pi'$-group and $\theta \in {\rm B}_\pi (N)$, then $\theta^G$ has a unique irreducible constituent in ${\rm B}_\pi (G)$ (Theorem 4.19 (b) of \cite{Isaacs2018}).

In \cite{odd}, Isaacs shows that when $2 \in \pi$ or $|G|$ is odd, then there are a number of additional properties that make the ${\rm B}_\pi$-characters easier to work with.  We will use these additional properties to show that an analog for the generalization of Gallagher's theorem holds under these hypotheses for $\pi$-partial characters.   The results we state in this paragraph are all initially proved in odd, but Isaacs has given another treatment in \cite{Isaacs2018}, and we will refer to the proofs there.  For an integer $m$, let $Q_m$ be the $m$th cyclotomic extension of $Q$.  (I.e., the extension of the rational field by the $m$th roots of unity.)  We write $|G| = ab$ where $a$ is a $\pi$-number and $b$ is a $\pi'$-number.  On page 50 of \cite{Isaacs2018}, Isaacs notes that ${\rm Gal} (Q_{ab})$ contains an automorphism $\sigma$ that acts like complex conjugation on $Q_b$ and fixes $Q_a$.  He calls $\sigma$ the {\it magic field} automorphism.  It is well known that the irreducible characters of $G$ have values lying in $Q_{ab}$.  On page 133 of \cite{Isaacs2018}, Isaacs shows that the magic field automorphism fixes every character in ${\rm B}_\pi (G)$.   Isaacs proves when $2 \in \pi$ or $|G|$ is odd that if $\chi^o \in {\rm I}_\pi (G)$ and $\chi$ is fixed by $\sigma$, then $\chi \in {\rm B}_\pi (G)$ (Theorem 5.2 of \cite{Isaacs2018}).

\section{Generalized Gallagher's theorem when $2 \in \pi$ or $|G|$ is odd}\label{secn: galla}

In this section let $\pi$ be a set of primes and let $G$ be a $\pi$-separable group.  We now consider the case where either $2 \in \pi$ or $|G|$ is odd.  In this situation, we can prove the generalized version of Gallagher's theorem.  We first prove a version of the generalized Gallagher's theorem for ${\rm B}_\pi$-characters.  When $N$ is a normal subgroup of $G$ and $\nu \in \irr (N)$, we write $\irr (G \mid \nu)$ for the set of irreducible constituents of $\nu^G$ and when $\nu \in {\rm B}_\pi (N)$, we set ${\rm B}_\pi (G \mid \nu) = {\rm B}_\pi (G) \cap \irr (G \mid \nu)$.


\begin{lem}\label{bpi-pre Gall,2inpi,|G|odd}
Let $\pi$ be a set of primes, let $G$ be a $\pi$-separable group, and let $N$ be a normal subgroup of $G$.  Suppose $2 \in \pi$ or  $|G|$ is odd.  Assume there exist characters $\varphi, \theta \in {\rm B}_{\pi} (N)$ so that $\varphi$ is $G$-invariant, $\theta$ extends to $\chi \in {\rm B}_{\pi} (G)$, and $\varphi \theta \in {\rm B}_\pi (N)$.    Then the map $\gamma \mapsto \gamma \chi$ is a bijection from ${\rm B}_\pi (G \mid \varphi) \rightarrow {\rm B}_\pi (G \mid \varphi\theta)$.
\end{lem}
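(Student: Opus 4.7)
The plan is to combine the ordinary generalized Gallagher theorem (Theorem 6.16 of \cite{Isaacs1976}) with the magic-field-automorphism characterization of ${\rm B}_\pi$-characters (Theorem 5.2 of \cite{Isaacs2018}), which is available here precisely because $2 \in \pi$ or $|G|$ is odd.

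First, viewing $\varphi, \theta \in {\rm B}_\pi(N)$ and $\chi \in {\rm B}_\pi(G)$ as ordinary irreducible characters, the hypotheses of the ordinary generalized Gallagher theorem are satisfied. Hence the map $\Psi \colon \gamma \mapsto \gamma \chi$ is a bijection $\irr(G \mid \varphi) \to \irr(G \mid \varphi \theta)$: for $\gamma \in \irr(G \mid \varphi)$ with $\gamma_N = e \varphi$, one has $(\gamma \chi)_N = e \varphi \theta$, confirming that $\gamma \chi \in \irr(G \mid \varphi \theta)$.

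Next, I would argue that $\Psi$ restricts to a bijection ${\rm B}_\pi(G \mid \varphi) \to {\rm B}_\pi(G \mid \varphi \theta)$. For the forward direction, take $\gamma \in {\rm B}_\pi(G \mid \varphi)$. Since $\gamma$ and $\chi$ are both fixed by the magic automorphism $\sigma$, so is $\gamma \chi$. By Theorem 5.2 of \cite{Isaacs2018}, to obtain $\gamma \chi \in {\rm B}_\pi(G)$ it suffices to verify that $(\gamma \chi)^o \in {\rm I}_\pi(G)$. For the reverse direction, given $\delta \in {\rm B}_\pi(G \mid \varphi \theta)$, the unique $\gamma \in \irr(G \mid \varphi)$ satisfying $\gamma \chi = \delta$ is $\sigma$-fixed (apply $\sigma$ to the equation and use the injectivity of $\Psi$), so it remains to show $\gamma^o \in {\rm I}_\pi(G)$.

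The main obstacle is establishing the irreducibility of $(\gamma \chi)^o = \gamma^o \chi^o$ as a $\pi$-partial character (and analogously of $\gamma^o$ in the reverse direction). Its restriction to $N$ equals $e (\varphi \theta)^o$, a multiple of the irreducible $G$-invariant partial character $(\varphi \theta)^o \in {\rm I}_\pi(N)$. Decomposing $(\gamma \chi)^o = \sum_i n_i \eta_i$ with distinct $\eta_i \in {\rm I}_\pi(G \mid (\varphi \theta)^o)$, Clifford theory for $\pi$-partial characters forces $(\eta_i)_N = f_i (\varphi \theta)^o$ and $\sum_i n_i f_i = e$. Combining this with the degree identity $\gamma(1) \chi(1) = e (\varphi \theta)(1)$ and exploiting that $\theta$ extending to $\chi$ trivializes the relevant factor set (so that $\alpha_{\varphi \theta} = \alpha_\varphi$ as cocycles on $G/N$), I aim to force the decomposition into a single term. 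Should this route prove too indirect, an alternative is to argue via the projective-representation correspondence that underpins the ordinary generalized Gallagher theorem and check directly that it preserves the ${\rm B}_\pi$-substructure in the presence of the $\sigma$-fixed hypothesis available when $2 \in \pi$ or $|G|$ is odd.
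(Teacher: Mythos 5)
Your framing is sound as far as it goes: Theorem 6.16 of \cite{Isaacs1976} does give the bijection $\irr (G \mid \varphi) \rightarrow \irr (G \mid \varphi\theta)$, the magic field automorphism $\sigma$ fixes $\gamma\chi$ whenever it fixes $\gamma$ and $\chi$, and Theorem 5.2 of \cite{Isaacs2018} reduces membership in ${\rm B}_\pi (G)$ to irreducibility of the restriction to $\pi$-elements. Your surjectivity argument is also essentially correct (and there the irreducibility is immediate: if $\gamma^o = \alpha + \beta$ with $\alpha,\beta$ partial characters, then $\delta^o = \alpha\chi^o + \beta\chi^o$ would be reducible, contradicting $\delta \in {\rm B}_\pi (G)$). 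The genuine gap is the forward direction: you never actually prove that $(\gamma\chi)^o = \gamma^o\chi^o$ is irreducible, and that is the entire content of the lemma beyond the formal reduction. The Clifford-plus-degree bookkeeping you sketch ($((\gamma\chi)^o)_N = e(\varphi\theta)^o$, constituents $\eta_i$ with $(\eta_i)_N = f_i(\varphi\theta)^o$ and $\sum_i n_i f_i = e$) does not force a single constituent: nothing in it excludes, say, $e = 2$ with two constituents each having $n_i = f_i = 1$; to exclude such configurations you would need to control the size and degrees of ${\rm I}_\pi (G \mid (\varphi\theta)^o)$ versus ${\rm I}_\pi (G \mid \varphi^o)$, which is essentially the statement being proved. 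The appeal to ``$\alpha_{\varphi\theta} = \alpha_\varphi$ as cocycles on $G/N$'' is an artifact of the projective-representation proof of the ordinary Theorem 6.16; no projective or factor-set machinery for $\pi$-partial characters is available (or cited) that would transport this information to ${\rm I}_\pi$, so that step cannot be carried out as stated, and your fallback suggestion is only a restatement of the problem.

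For comparison, the paper closes exactly this gap by induction on $|G|$: it passes to a maximal normal subgroup $M$ with $N \le M < G$, uses Corollary 4.21 of \cite{Isaacs2018} to see that a constituent $\delta$ of $\gamma_M$ and $\mu = \chi_M$ lie in ${\rm B}_\pi (M)$, applies the inductive hypothesis to get $\delta\mu \in {\rm B}_\pi (M)$, and notes $\gamma\chi$ is a constituent of $(\delta\mu)^G$. It then splits on whether $G/M$ is a $\pi$-group (Theorem 4.14 of \cite{Isaacs2018}) or a $\pi'$-group, necessarily of odd prime order since $2 \notin \pi'$; in the latter case either $\delta$ induces irreducibly (Theorem 4.19) or $\delta$ extends, and only in that last subcase is the magic-automorphism criterion invoked, where irreducibility of $(\gamma\chi)^o$ comes for free because $((\gamma\chi)^o)_M = (\delta\mu)^o$ is already irreducible. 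Some substantive input of this kind is required; as written, your proposal is a correct reduction with the central step missing.
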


\begin{proof}
We begin by noting since $\varphi \theta$ is irreducible and $\varphi$ and $\theta$ are $G$-invariant and $\theta$ extends to $\chi$ that we may apply Theorem 6.16 of \cite{Isaacs1976} to see that the map $\gamma \mapsto \gamma \chi$ is a bijection from $\irr (G \mid \varphi) \rightarrow \irr (G \mid \varphi\theta)$.

We first prove that if $\gamma \in {\rm B}_\pi (G \mid \varphi)$, then $\gamma \chi \in {\rm B}_\pi (G \mid \varphi\theta)$.  We do this via induction on $|G|$.  If $G = N$, then $\gamma = \varphi$ and $\chi = \theta$, and the result is trivial.  Thus, we may assume that $N < G$, and we may find $M$ maximal normal in $G$ so that $N \le M < G$.  Let $\mu = \chi_M$ so that $\mu_N = \theta$.  Take $\delta$ to be a constituent of $\gamma_M$.  We see that $\delta \in \irr (M \mid \varphi)$.  By Corollary 4.21 (a) of \cite{Isaacs2018},
we have that $\delta, \mu \in {\rm B}_\pi (M)$.  Thus, by the inductive hypothesis, we know that $\delta \mu \in {\rm B}_{\pi} (M)$.  Observe that by Problem 5.3 of \cite{Isaacs1976}, $\gamma \chi$ is a constituent of $(\delta\mu)^G$.

We know $G/M$ is either a $\pi$-group or a $\pi'$-group.  If $G/M$ is a $\pi$-group, then every constituent of $(\delta\mu)^G$ lies in ${\rm B}_\pi (G)$ by Theorem 4.14 of \cite{Isaacs2018}, so $\gamma \chi \in {\rm B}_\pi (G)$.  Thus, we may assume that $G/M$ is a $\pi'$-group.  Since $M$ is maximal normal and $2 \not\in \pi'$, we have that $|G/M| = p$ for some prime $p$ not in $\pi$.  
Observe that either $\delta^G$ is irreducible or $\delta$ extends to $G$.  If $\delta^G$ is irreducible, then $(\delta\mu)^G$ is irreducible by Problem 5.3 of \cite{Isaacs1976}.  Thus, $\gamma \chi = (\delta\mu)^G$.  By Theorem 4.19 of \cite{Isaacs2018}, $\gamma\chi \in {\rm B}_\pi (G)$.  Notice that if $\delta$ extends to $G$, then every constituent of $\delta^G$ is an extension.  Hence, $\gamma$ is an extension of $\delta$.  We have $(\gamma\chi)_M =\gamma_M \chi_M = \delta\mu$.  Thus, $((\gamma\chi)^o)_M = (\gamma_M\chi_M)^o = \delta^o\mu^o =(\delta\mu)^o$ is irreducible.  Thus, $(\gamma\chi)^o$ is irreducible.  The magic field automorphism fixes both $\gamma$ and $\chi$, so it fixes $\gamma \chi$.  Applying Theorem 5.2 of \cite{Isaacs2018}, we see that $\gamma\chi \in {\rm B}_\pi (G)$.

We have shown that the map $\gamma \mapsto \gamma \chi$ maps ${\rm B}_{\pi} (G \mid \varphi)$ to ${\rm B}_{\pi} (G \mid \varphi \theta)$.  We can use the initial observation on $\irr (G \mid \varphi)$ and $\irr (G \mid \varphi\theta)$ to see that this map must be an injection.

Suppose $\zeta \in {\rm B}_\pi (G \mid \varphi\theta)$.  We know that there exists $\kappa \in \irr ({G \mid \varphi})$ such that $\kappa \chi = \zeta$.  Since $\zeta$ is fixed by the magic field automorphism (as mentioned above this is proved on page 133 of \cite{Isaacs2018}), the bijection will imply that $\kappa$ is fixed by the magic field automorphism.  Since $\zeta^o = (\kappa \chi)^o = \kappa^o \chi^o$ is irreducible, it follows that $\kappa^o$ is irreducible.  By Theorem 5.2 of \cite{Isaacs2018}, we have $\kappa \in {\rm B}_\pi (G)$, and this implies that $\kappa \in {\rm B}_\pi (G \mid \varphi )$, as desired.  We deduce that the map is surjective, and this proves the lemma.
\end{proof}

We obtain Gallagher's theorem for ${\rm B}_\pi$-characters when either $2 \in \pi$ or $|G|$ is odd as an immediate corollary.

\begin{cor} \label{bpi Gall,2inpi,|G|odd}
Let $\pi$ be a set of primes, let $G$ be a $\pi$-separable group, and let $N$ be a normal subgroup of $G$.  Suppose $2 \in \pi$ or $|G|$ is odd.  Suppose there exists a character $\chi \in {\rm B}_{\pi} (G)$ so that $\theta = \chi_N \in \irr (N)$.  Then the map $\beta \mapsto \beta\chi$ is a bijection from ${\rm B}_{\pi} (G/N)$ to ${\rm B}_\pi (G \mid \theta)$.
\end{cor}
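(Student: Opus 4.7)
The plan is to deduce this immediately from Lemma \ref{bpi-pre Gall,2inpi,|G|odd} by specializing $\varphi$ to the trivial character $1_N$ of $N$. First I would check the hypotheses of the lemma are satisfied in this setting: the trivial character $1_N$ lies in ${\rm B}_\pi(N)$ and is trivially $G$-invariant; $\theta = \chi_N$ is irreducible by hypothesis, and since $\chi \in {\rm B}_\pi(G)$ the constituents of $\chi_N$ lie in ${\rm B}_\pi(N)$ by Corollary 4.21(a) of \cite{Isaacs2018}, so $\theta \in {\rm B}_\pi(N)$; moreover $\theta$ extends to $\chi \in {\rm B}_\pi(G)$ by assumption; and $\varphi\theta = \theta \in {\rm B}_\pi(N)$.

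Thus the lemma applies and yields a bijection
\[
{\rm B}_\pi(G \mid 1_N) \longrightarrow {\rm B}_\pi(G \mid \theta), \qquad \gamma \mapsto \gamma\chi.
\]
The remaining step is to identify ${\rm B}_\pi(G \mid 1_N)$ with ${\rm B}_\pi(G/N)$ via the usual inflation correspondence. A character $\gamma \in \irr(G)$ lies in $\irr(G \mid 1_N)$ precisely when $N \le \ker(\gamma)$, and such characters are precisely the inflations of characters in $\irr(G/N)$. I would then note that because $\chi^o$ and $\gamma^o$ agree on $\pi$-elements, inflation from $G/N$ commutes with the operation $\chi \mapsto \chi^o$ (the $\pi$-elements of $G/N$ being the images of the $\pi$-elements of $G$), so irreducibility on $\pi$-elements is preserved, and hence inflation restricts to a bijection ${\rm B}_\pi(G/N) \to {\rm B}_\pi(G \mid 1_N)$.

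Composing the inflation bijection with the bijection furnished by Lemma \ref{bpi-pre Gall,2inpi,|G|odd} gives the desired bijection $\beta \mapsto \beta\chi$ from ${\rm B}_\pi(G/N)$ onto ${\rm B}_\pi(G \mid \theta)$, where $\beta$ is tacitly inflated to $G$ before forming the product. The only step that is not completely formal is the inflation identification ${\rm B}_\pi(G/N) \leftrightarrow {\rm B}_\pi(G \mid 1_N)$; this is where one should cite the appropriate behavior of ${\rm B}_\pi$-characters under passage to quotients, which is standard in the references already invoked. No additional obstacles are expected.
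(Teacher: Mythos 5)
Your proposal is correct and matches the paper's own argument, which likewise obtains the corollary as the special case $\varphi = 1$ of Lemma \ref{bpi-pre Gall,2inpi,|G|odd}; you are in fact more explicit than the paper, which leaves the identification of ${\rm B}_\pi(G \mid 1_N)$ with ${\rm B}_\pi(G/N)$ implicit. Just note that this identification rests on the standard fact that a character containing $N$ in its kernel lies in ${\rm B}_\pi(G)$ exactly when the corresponding character of $G/N$ lies in ${\rm B}_\pi(G/N)$ (behavior of ${\rm B}_\pi$-characters under quotients), rather than on irreducibility of the restriction to $\pi$-elements alone, which you rightly flag as the point to cite.
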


\begin{proof}
This is the special case of Lemma \ref{bpi-pre Gall,2inpi,|G|odd} with $\varphi = 1$.
\end{proof}

In the situation where $G$ is a group with a normal subgroup $N$ and $\eta \in {\rm I}_\pi (N)$, we define ${\rm I}_\pi (G \mid \eta)$ to be the partial characters in ${\rm I}_\pi (G)$ whose restriction to $N$ contains $\eta$.  When $\nu \in {\rm B}_\pi (N)$ so that $\eta = \nu^o \in {\rm I}_\pi (N)$, we do not see that it has been proved before that the map $\chi \mapsto \chi^o$ is a bijection from ${\rm B}_\pi (G \mid \nu) \rightarrow {\rm I}_\pi (G \mid \eta)$.  We prove that next.  However, as we mentioned above, there have been other sets of lifts constructed, and we realized that all we needed was that we had the bijection on $G$ and $N$ and that the constituent for the characters in the subset of $G$ lie in $N$.  Hence, we prove the result in that generality, and we will be able to apply this to other sets of lifts.

\begin{lem} \label{normal lifts}
Let $\pi$ be a set of primes, let $G$ be a $\pi$-separable group, and let $N$ be a normal subgroup of $G$.   Assume there exist subset $X_\pi (G) \subseteq \irr (G)$ and $X_\pi (N) \subseteq \irr (N)$ so that the map $\theta \mapsto \theta^o$ is a bijection from $X_\pi (G)$ to ${\rm I}_\pi (G)$ and from $X_\pi (N)$ to ${\rm I}_\pi (N)$, and assume for $\chi \in X_\pi (G)$ that the constituents of $\chi_N$ lie in $X_\pi (N)$.  If there exist $\nu \in X_\pi (N)$ and $\eta = \nu^o \in {\rm I}_\pi (N)$, then the map $\chi \mapsto \chi^o$ is a bijection from $X_{\pi} (G \mid \nu) \rightarrow {\rm I}_\pi (G \mid \eta)$.
\end{lem}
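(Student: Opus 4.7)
The plan is to exploit the given bijection $X_\pi(G) \to {\rm I}_\pi(G)$ and show that, under these hypotheses, the relation ``lies over $\nu$'' on the $X_\pi$-side corresponds exactly to ``lies over $\eta$'' on the ${\rm I}_\pi$-side. Once that correspondence is established, the lemma is immediate.

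First I would check that the map is well-defined. Let $\chi \in X_\pi(G \mid \nu)$ and write $\chi_N = \sum_i a_i \theta_i$ with the $\theta_i \in \irr(N)$ distinct. By hypothesis every $\theta_i$ lies in $X_\pi(N)$, and since $\chi \in X_\pi(G \mid \nu)$ we have $\nu = \theta_j$ for some $j$ with $a_j \ge 1$. Because restriction to $G^o$ commutes with restriction to subgroups, $(\chi^o)_N = (\chi_N)^o = \sum_i a_i \theta_i^o$. The $\theta_i^o$ are pairwise distinct elements of ${\rm I}_\pi(N)$ (because $\theta \mapsto \theta^o$ is a bijection on $X_\pi(N)$), so this is the decomposition of $(\chi^o)_N$ into irreducible $\pi$-partial characters. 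In particular $\eta = \nu^o = \theta_j^o$ appears with positive multiplicity, so $\chi^o \in {\rm I}_\pi(G \mid \eta)$.

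Injectivity is free from the hypothesis: the global map $X_\pi(G) \to {\rm I}_\pi(G)$, $\chi \mapsto \chi^o$, is a bijection, so its restriction to any subset of $X_\pi(G)$ is injective.

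For surjectivity, take $\phi \in {\rm I}_\pi(G \mid \eta)$. By the hypothesized bijection there is a unique $\chi \in X_\pi(G)$ with $\chi^o = \phi$; I must show $\nu$ is a constituent of $\chi_N$. Expanding $\chi_N = \sum_i a_i \theta_i$ as before, each $\theta_i \in X_\pi(N)$ by hypothesis, and the $\theta_i^o$ are distinct elements of ${\rm I}_\pi(N)$, so $\phi_N = (\chi^o)_N = \sum_i a_i \theta_i^o$ is the irreducible decomposition of $\phi_N$. Since $\eta$ is a constituent of $\phi_N$, we have $\eta = \theta_k^o$ with $a_k \ge 1$ for some $k$. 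Using the bijectivity of $\theta \mapsto \theta^o$ on $X_\pi(N)$ together with $\eta = \nu^o$, we conclude $\theta_k = \nu$. Hence $\nu$ is a constituent of $\chi_N$, so $\chi \in X_\pi(G \mid \nu)$, and $\chi \mapsto \chi^o = \phi$.

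There is essentially no obstacle: the entire argument is the observation that whenever constituents of $\chi_N$ are forced into $X_\pi(N)$, the $\theta \mapsto \theta^o$ bijection on $X_\pi(N)$ transports the Clifford-type decomposition of $\chi_N$ to the decomposition of $(\chi^o)_N = \phi_N$ bijectively, so ``lying over $\nu$'' and ``lying over $\eta$'' match. The only subtlety worth stating explicitly is that the $\theta_i^o$ really are pairwise distinct, which is where the bijectivity hypothesis on $X_\pi(N)$ (not merely well-definedness) is used.
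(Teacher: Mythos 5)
Your proposal is correct and follows essentially the same route as the paper: well-definedness, injectivity inherited from the global bijection $X_\pi(G)\to{\rm I}_\pi(G)$, and surjectivity via the uniqueness of the lift $\nu$ of $\eta$ in $X_\pi(N)$ together with the hypothesis that the constituents of $\chi_N$ lie in $X_\pi(N)$. The only cosmetic difference is that you verify well-definedness by decomposing $(\chi^o)_N=(\chi_N)^o$ directly, while the paper passes through $\chi^o$ being a constituent of $(\nu^G)^o=\eta^G$; both are fine.
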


\begin{proof}
Suppose $\chi \in X_{\pi} (G \mid \nu)$.  We know that $\chi^o \in {\rm I}_\pi (G)$.  Also, $\chi$ is a constituent of $\nu^G$, so $\chi^o$ is a constituent of $(\nu^G)^o = (\nu^o)^G = \eta^G$, and so, $\chi^o \in {\rm I}_\pi (G \mid \eta)$.  Thus, the map is well-defined.  Since the map from $X_\pi (G)$ to ${\rm I}_\pi (G)$ is an injection, we see that the restriction will be an injection also.  Suppose $\zeta \in {\rm I}_\pi (G \mid \eta)$.  Let $\gamma \in X_\pi (G)$ so that $\gamma^o = \zeta$.  We know that $\eta$ is a constituent of $(\gamma_N)^o = (\gamma^o)_N = \zeta_N$.  We know that all the constituents of $\gamma_N$ lie in $X_\pi (N)$.  Since $\nu$ is the unique lift of $\eta$ in $X_\pi (N)$, it follows that $\nu$ is a constituent of $\gamma_N$.  Thus, $\gamma \in X_\pi (G \mid \nu)$.  It follows that the map is surjective, and we have the desired bijection.
\end{proof}

We now consider $\pi$-partial characters.  We obtain the generalized version of Gallagher's theorem when $2 \in \pi$ or $|G|$ is odd.  In other words, we prove Theorem \ref{ipi pre,2inpi,|G|odd}.


\begin{proof}[Proof of Theorem \ref{ipi pre,2inpi,|G|odd}]
Let $\gamma, \theta \in {\rm B}_\pi (N)$ so that $\gamma^o = \eta$ and $\theta^o = \xi$.  Take $\chi \in {\rm B}_\pi (G)$ so that $\chi^o = \zeta$.  Observe that since $\eta$ is $G$-invariant, it follows that $\gamma$ is $G$-invariant.  We have that $(\chi_N)^o = (\chi^o)_N = \zeta_N = \xi$.  We know that $\chi_N$ lies in ${\rm B}_\pi (N)$ and thus, $\chi_N = \theta$.  We may now apply Lemma \ref{bpi-pre Gall,2inpi,|G|odd} to see that there is a bijection from ${\rm B}_\pi (G \mid \varphi)$ to ${\rm B}_\pi (G \mid \varphi\theta)$.  It is not difficult to see that restriction is a bijection between ${\rm B}_\pi (G \mid \varphi)$ to ${\rm I}_\pi (G \mid \eta)$ and ${\rm B}_\pi (G \mid \varphi \theta)$ to ${\rm I}_\pi (G \mid \eta \xi)$ composing the appropriate versions of these bijections gives the desired bijection.
\end{proof}

We now obtain a version of Gallagher's theorem for $\pi$-partial characters when $2 \in \pi$ or $|G|$ is odd.  Notice that this Theorem \ref{ipi Gall,gen} with the additional assumption that $2 \in \pi$ or $|G|$ is odd.

\begin{cor}\label{ipi Gall,2inpi,|G|odd}
Let $\pi$ be a set of primes, let $G$ be a $\pi$-separable group, and let $N$ be a normal subgroup of $G$.  Suppose $2 \in \pi$ or $|G|$ is odd.  Suppose there exists a character $\zeta \in {\rm I}_{\pi} (G)$ so that $\xi = \zeta_N \in {\rm I}_\pi (N)$.  Then the map $\kappa \mapsto \kappa\zeta$ is a bijection from ${\rm I}_{\pi} (G/N)$ to ${\rm I}_\pi (G \mid \xi)$.
\end{cor}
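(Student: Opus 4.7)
The plan is to deduce this corollary from Theorem \ref{ipi pre,2inpi,|G|odd} by specializing to the trivial partial character on $N$. Specifically, I would take $\eta = 1_N$ and let $\xi = \zeta_N$. Then $\eta$ is the trivial partial character of $N$, which is automatically $G$-invariant, and $\eta \xi = \xi \in {\rm I}_\pi(N)$ by hypothesis; the character $\xi$ extends to $\zeta \in {\rm I}_\pi(G)$, again by hypothesis. All the hypotheses of Theorem \ref{ipi pre,2inpi,|G|odd} are thus satisfied, yielding a bijection $\kappa \mapsto \kappa \zeta$ from ${\rm I}_\pi(G \mid 1_N)$ to ${\rm I}_\pi(G \mid \xi)$.

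The only remaining point is to identify ${\rm I}_\pi(G \mid 1_N)$ with ${\rm I}_\pi(G/N)$ via inflation. If $\kappa \in {\rm I}_\pi(G/N)$, then $\kappa$ inflates to a class function on $G^o$ that is trivial on $N^o$, and standard inflation results in $\pi$-theory show that this inflation lies in ${\rm I}_\pi(G)$ and contains $1_N$ upon restriction to $N^o$. Conversely, any $\alpha \in {\rm I}_\pi(G \mid 1_N)$ has $\alpha_N = \alpha(1) \cdot 1_{N^o}$ by Clifford's theorem for $\pi$-partial characters (so $\alpha$ is trivial on $N^o$), hence arises from inflation of a member of ${\rm I}_\pi(G/N)$. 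Composing this inflation bijection ${\rm I}_\pi(G/N) \to {\rm I}_\pi(G \mid 1_N)$ with the bijection from Theorem \ref{ipi pre,2inpi,|G|odd} delivers the desired bijection $\kappa \mapsto \kappa\zeta$ from ${\rm I}_\pi(G/N)$ to ${\rm I}_\pi(G \mid \xi)$.

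The only potential obstacle is the inflation identification ${\rm I}_\pi(G/N) \cong {\rm I}_\pi(G \mid 1_N)$, but this is a standard feature of Isaacs' $\pi$-theory (paralleling the analogous facts for ordinary and Brauer characters) and follows from the fact that restriction sends $\pi$-partial characters to $\pi$-partial characters together with the Clifford correspondence for $\pi$-partial characters. Thus the argument is essentially a direct reduction to Theorem \ref{ipi pre,2inpi,|G|odd}.
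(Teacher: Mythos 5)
Your proposal is correct and matches the paper's own proof, which is exactly the specialization of Theorem \ref{ipi pre,2inpi,|G|odd} to $\eta = 1_N$ (the paper leaves the standard identification of ${\rm I}_\pi(G/N)$ with ${\rm I}_\pi(G \mid 1_N)$ via inflation implicit, which you spell out). No issues.
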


\begin{proof}
This is the special case of Theorem \ref{ipi pre,2inpi,|G|odd} with $\eta = 1$.
\end{proof}

\section{Constructing nuclei from chains of subgroups} \label{secn: nucl}

As we stated in Section \ref{secn: pi theory}, we use a different set of lifts to prove the general version of the analog of Clifford's theory for $\pi$-partial characters.  We will be using the ideas from \cite{nucl}.  Given a group $G$, {\it a chain of normal subgroups} ${\mathcal N} = \{ G = N_0 > N_1 > \dots > N_n = 1 \}$ where each $N_i$ is normal in $G$.  Given a character $\chi \in \irr (G)$, write $\nu_0 = \chi$.  For $i \ge 0$, take $\nu_{i+1}$ to be an irreducible constituent of $(\nu_i)_{N_{i+1}}$.  Take $U$ to be the stabilizer of the set $\{ \nu_0, \nu_1, \dots, \nu_n \}$.  In Lemma 3.2 of \cite{nucl}, we prove that this sequence of characters determines a unique character $\phi \in \irr (U)$ and $\phi^G = \chi$.  We will say that $(U,\phi)$ is the ${\mathcal N}$-nucleus of $\chi$.  We note that all the choices for the $\nu_i$'s are conjugate, and so, a different choice of $\nu_i$'s will give a conjugate of $(U,\phi)$.  In any case, $(U,\phi)$ is uniquely determined up to conjugacy.

To define the lifts, we need $\pi$-special characters.  The $\pi$-special characters are initially defined by Gajendragadkar in \cite{gajen}.  We will follow the presentation of $\pi$-special characters in \cite{Isaacs2018}.  We need $\pi$ to be a set of primes and $G$ to be a $\pi$-separable group.  Given a character $\chi \in \irr (G)$.   We say that $\chi$ is {\it $\pi$-special} if $\chi (1)$ is a $\pi$-number and for every subnormal subgroup $S$, the determinantal order of the constituents of $\chi_S$ are $\pi$-numbers.  One interesting property of $\pi$-special characters is that if $\alpha$ is a $\pi$-special character of $G$ and $\beta$ is a $\pi'$-special character of $G$, then $\alpha\beta$ is irreducible (Theorem 2.2 of \cite{Isaacs2018}).  We say $\chi \in \irr (G)$ is {\it ($\pi$)-factored} if $\chi$ can be written as the product of a $\pi$-special and $\pi'$-special character.

Now, suppose ${\mathcal N}$ is a chain of normal subgroups where each of the factor groups is a $\pi$-group or a $\pi'$-group.  In Lemma 4.1 of \cite{nucl} it is proved that if $(U,\phi)$ is the ${\mathcal N}$-nucleus for $\chi \in \irr (G)$, then $\phi$ is $\pi$-factored.  With this in mind, we define ${\rm B}_\pi (G: {\mathcal N})$ to be the characters $\chi \in \irr (N)$ whose ${\mathcal N}$-nucleus $(U,\phi)$ have that $\phi$ is $\pi$-special.  In Theorem B of \cite{nucl}, it is shown that $\chi \mapsto \chi^o$ is a bijection from ${\rm B}_\pi (G : {\mathcal N}) \rightarrow {\rm I}_\pi (G)$.  If $N \in {\mathcal N}$ and ${\mathcal N}^*$ is the chain obtained by omitting the subgroups properly containing $N$, then it is shown in Theorem 5.2 (1) of \cite{nucl} for $\chi \in {\rm B}_\pi (G: {\mathcal N})$, the irreducible constituents of $\chi_N$ lie in ${\rm B}_\pi (N : {\mathcal N}^*)$.

\section{A general $\pi$ version of Gallagher's theorem} \label{secn: gen}

We now drop the hypothesis that $2 \in \pi$ or $|G|$ is odd.  However, in this general case, we have not been able to prove the generalized version of Gallagher's theorem.  Also, we have not been able to prove Gallagher's theorem for the ${\rm B}_\pi$ characters of $G$.  In order to obtain enough control of the nuclei we need to use the nuclei constructed in \cite{nucl} and the associated lifts there.  We start by obtaining a general result on the product of $\pi$-special characters.

\begin{lem} \label{pi spec prod}
Let $\pi$ be a set of primes.  If $\chi, \theta \in \irr (G)$ are $\pi$-special and $\chi\theta$ is irreducible, then $\chi\theta$ is $\pi$-special.
\end{lem}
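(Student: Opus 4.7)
The plan is to verify the two defining conditions of $\pi$-specialness directly for $\chi\theta$. The degree condition is immediate, since $(\chi\theta)(1)=\chi(1)\theta(1)$ is a product of $\pi$-numbers.

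For the determinantal-order condition on subnormal restrictions, I first handle the case $S=G$ itself, where the only irreducible constituent of $(\chi\theta)_G$ is $\chi\theta$. Because $\chi\theta$ is irreducible, it is afforded by the tensor product of the representations of $\chi$ and $\theta$, so the standard determinant-of-tensor-product identity gives
\[
\det(\chi\theta)=\det(\chi)^{\theta(1)}\det(\theta)^{\chi(1)}.
\]
Since $\chi$ and $\theta$ are $\pi$-special, both $\det(\chi)$ and $\det(\theta)$ are linear characters of $\pi$-order, and raising them to the $\pi$-number exponents $\theta(1)$ and $\chi(1)$ still gives linear characters of $\pi$-order. Hence $\det(\chi\theta)$ has $\pi$-order, so $o(\chi\theta)$ is a $\pi$-number.

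For a proper subnormal $S<G$, I would induct on $|G|$. Pick a normal subgroup $N$ of $G$ with $S\le N<G$ and $S$ subnormal in $N$. Since $(\chi\theta)_S=((\chi\theta)_N)_S$, it suffices to show that every irreducible constituent $\eta$ of $(\chi\theta)_N$ is $\pi$-special in $N$: one then applies $\pi$-specialness of $\eta$ to the subnormal subgroup $S\le N$ to conclude that $o(\mu)$ is a $\pi$-number for every constituent $\mu$ of $\eta_S$, and these exhaust the constituents of $(\chi\theta)_S$. By Clifford's theorem, together with the $\pi$-specialness of $\chi$ and $\theta$, both $\chi_N$ and $\theta_N$ decompose as nonnegative-integer combinations of irreducible characters of $N$, each of which is $\pi$-special. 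Consequently $(\chi\theta)_N=\chi_N\theta_N$ is a nonnegative-integer combination of products $\alpha\beta$ with $\alpha,\beta\in\irr(N)$ both $\pi$-special, and each $\eta$ of interest appears in some such $\alpha\beta$.

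The main obstacle is that $\alpha\beta$ need not be irreducible, so the lemma itself cannot be invoked at the smaller group $N$ to conclude directly that $\eta$ is $\pi$-special. The resolution is to strengthen the inductive claim to Gajendragadkar's theorem, which asserts that every irreducible constituent of the product of two $\pi$-special characters is $\pi$-special. With this stronger statement proved simultaneously by induction on $|G|$ and applied inside $N$, one obtains that $\eta$ is $\pi$-special regardless of whether $\alpha\beta$ is irreducible, and the argument closes.
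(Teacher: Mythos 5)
Your degree computation and the case $S=G$ are fine, and your reduction of the proper subnormal case is sound: choosing a proper normal $N$ with $S\le N$ and $S$ subnormal in $N$, it does suffice to know that every irreducible constituent of $(\chi\theta)_N$ is $\pi$-special, and the constituents of $\chi_N$ and $\theta_N$ are indeed $\pi$-special directly from the definition. The genuine gap is the final step. You replace the lemma by the strictly stronger closure statement (every irreducible constituent of a product of two $\pi$-special characters is $\pi$-special) and assert that it can be ``proved simultaneously by induction on $|G|$,'' but no proof is given, and the induction as you have set it up does not close: to establish the stronger statement at the level of $G$ you must show that an arbitrary irreducible constituent $\eta$ of the possibly reducible product has $\pi$-degree and $\pi$-determinantal order, and the tensor-product identity $\det(\chi\theta)=\det(\chi)^{\theta(1)}\det(\theta)^{\chi(1)}$ controls only the determinant of the whole product, not of its individual constituents. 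For instance, for $G=S_3$ and $\chi$ the degree-two character, $\det(\chi\cdot\chi)$ is trivial while the constituent ${\rm sgn}$ has determinantal order $2$; so one cannot pass from the determinantal order of a product to that of a constituent. Thus the heart of the matter is exactly what is left unproved. Moreover, the closure theorem you attribute to Gajendragadkar is a result about $\pi$-separable groups, whereas the lemma as stated carries no $\pi$-separability hypothesis; quoting it instead of proving it would at best yield the lemma in reduced generality (enough for this paper's application in Lemma \ref{bij bpi nucl}, but not the statement as written).

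For comparison, the paper's proof is direct and uses no induction and no closure theorem: for each subnormal $S$, an irreducible constituent $\gamma$ of $(\chi\theta)_S$ is a constituent of $\sigma\tau$ for some irreducible constituents $\sigma$ of $\chi_S$ and $\tau$ of $\theta_S$, and the same determinant identity is applied to $\sigma\tau$, whose factors have $\pi$-determinantal order by the $\pi$-specialness of $\chi$ and $\theta$. Note, however, that the delicate point in that argument is again the passage from the determinantal order of $\sigma\tau$ to that of its constituent $\gamma$, which is precisely where your proposal stops; if you repair your write-up, that step should be addressed explicitly rather than delegated to an unproved strengthened statement.
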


\begin{proof}
Since $\chi (1)$ and $\theta (1)$ are $\pi$-numbers, it follows that $\chi(1) \theta (1)$ will be a $\pi$-number.  Let $S$ be a subnormal subgroup of $G$.  We know that $\chi_S$ and $\theta_S$ have irreducible constituents whose determinantal orders are $\pi$-numbers.  That is, if $\sigma$ is an irreducible constituent of $\chi_S$ and $\tau$ is an irreducible constituent $\theta_S$, then $\sigma$ and $\tau$ have determinantal orders that are $\pi$-numbers.  Now, if $\gamma$ is an irreducible constituent of $(\chi\theta)_S$, then there will exist $\sigma$ an irreducible constituent of $\chi_S$ and $\tau$ an irreducible constituent of $\theta_S$ so that $\gamma$ is a constituent of $\sigma \tau$.  Hence, the determinant of $\gamma$ will be a power of the determinant of $\sigma \tau$.  Now, computing with representations, one can show that
$${\rm det} (\sigma \tau) = {\rm det} (\sigma)^{\tau (1)} {\rm det} (\tau)^{\sigma (1)}.$$
Hence, it follows since $\sigma$ and $\tau$ have $\pi$-orders that $\sigma \tau$ will have $\pi$-order, and thus, $\gamma$ has $\pi$-order.
\end{proof}

We begin by considering the stabilizers of characters of normal subgroups contained in a normal subgroup where the character of the group restricts irreducibly.

\begin{lem} \label{ext stab}
Let $M < N < G$ be groups.  Suppose $\chi \in \irr (G)$ and $\nu = \chi_N$ is irreducible.  If $\theta \in \irr (M)$ is a constituent of $\nu_M$ and $T$ is the stabilizer of $\theta$ in $G$, then $G = TN$.
\end{lem}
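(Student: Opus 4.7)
The plan is to use Clifford's theorem twice, once for $G$ acting on $M$ via $\chi$ and once for $N$ acting on $M$ via $\nu$, and then compare the resulting orbits of $\theta$. Implicit in the hypothesis is that $M$ and $N$ are normal in $G$ (the definition of $T$ as a stabilizer forces $M \triangleleft G$, and the previous lemmas all concern normal subgroups). Since $\chi_N = \nu$, restriction gives $\chi_M = (\chi_N)_M = \nu_M$. Applied to $\chi \in \irr(G)$, Clifford's theorem identifies the set of irreducible constituents of $\chi_M$ with the $G$-orbit of $\theta$; applied to $\nu \in \irr(N)$, it identifies the set of irreducible constituents of $\nu_M$ with the $N$-orbit of $\theta$. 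These two sets of constituents coincide, so the $G$-orbit of $\theta$ equals its $N$-orbit.

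From here the conclusion is just orbit counting. By the orbit-stabilizer theorem the $G$-orbit has size $[G:T]$, and the $N$-orbit has size $[N : I_N(\theta)] = [N : T \cap N]$. The second isomorphism theorem rewrites this last index as $[TN:T]$, so the equality of orbit sizes yields
\[
[G:T] = [TN : T].
\]
Since $T \le TN \le G$, this forces $TN = G$, which is the desired conclusion.

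There is no real obstacle here: the argument is a standard application of Clifford's theorem together with the index identity $[N:T\cap N] = [TN:T]$. The only point requiring a moment's care is recording that $\chi_M = \nu_M$ genuinely makes the two orbits literally the same set of characters of $M$, rather than merely orbits of the same size, so that the identification of the $G$-orbit with the $N$-orbit is automatic from Clifford's theorem rather than requiring a separate argument.
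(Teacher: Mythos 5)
Your proof is correct, but it takes a different route from the paper's. You compare the $G$-orbit of $\theta$ with its $N$-orbit: since $\chi_M=(\chi_N)_M=\nu_M$, Clifford's theorem applied in $G$ and in $N$ shows these orbits are literally the same set of characters of $M$, and then orbit--stabilizer together with the index identity $[N:T\cap N]=[TN:T]$ forces $[G:T]=[TN:T]$, hence $G=TN$. The paper argues instead through the Clifford correspondence: it restricts $\chi$ to $TN$ to get an irreducible $\tau=\chi_{TN}$, takes the Clifford correspondent $\sigma\in\irr(T\mid\theta)$ of $\tau$ over $\theta$ inside $TN$, observes that $\sigma^G$ is irreducible because $T=I_G(\theta)$, deduces that $\tau^G$ is irreducible, and concludes that $\chi_{TN}=\tau$ and $\tau^G\in\irr(G)$ are only compatible when $G=TN$. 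Your version is more elementary --- it is pure counting with no induced characters --- while the paper's stays inside the induction/restriction formalism it reuses in the subsequent nucleus computations; the two are of essentially equal length and strength. You were also right to record explicitly that $M\trianglelefteq G$ (hence $M\trianglelefteq N$) is implicit in the statement, since the stabilizer $T$ and both applications of Clifford's theorem need it; that is exactly how the lemma is used later, where $M$ and $N$ are members of a chain of normal subgroups of $G$.
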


\begin{proof}
We see that $\tau = \chi_{TN}$ is irreducible since $\chi_N$ is irreducible.  Observe that $T$ is the stabilizer of $\theta$ in $TN$ and since $\theta$ is a constituent of $\nu_M$, we see that $\theta$ is a constituent of $\tau_M = (\nu_N)_M$.   Let $\sigma \in \irr ({T \mid \theta})$ be the Clifford correspondent for $\tau$, so we have $\sigma^{TN} = \tau$.  We can also apply Clifford's theorem in $G$, and see that $\sigma^G$ is irreducible.  This implies that $\tau^G$ is irreducible.  The only way that $\chi_{TN} = \tau$ and $\tau^G$ irreducible can both be true is if $G = TN$.  Thus, we see that the lemma is proved.
\end{proof}

Using the previous lemma, we now compute the nucleus for a chain where a character restricts irreducibly to one of the subgroups in the chain.

\begin{lem} \label{ext nucl}
Let $G$ be a group and let ${\mathcal N}$ be a sequence of normal subgroups of $G$.  Assume $N \in {\mathcal N}$ and suppose $\chi \in \irr (G)$ satisfies that $\chi_N = \nu$ is irreducible.  If $(U,\varphi)$ is the ${\mathcal N}$-nucleus for $\chi$, then $G = NU$ and $\varphi_{N \cap U} \in \irr ({N \cap U})$.
\end{lem}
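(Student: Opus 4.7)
My plan is to obtain both conclusions by applying Lemma \ref{ext stab} down the chain ${\mathcal N}$, peeling off one link at a time via iterated Clifford correspondents. First I observe that $\chi_N = \nu$ irreducible forces $\chi_{N_i}$ to be irreducible, hence $G$-invariant, for every $N_i$ containing $N$; so the first several links of the chain contribute trivially to the stabilizer defining $U$. Writing $N = N_k$ and letting $T_i$ denote the stabilizer of $\nu_i$ in $G$, we then have $U = T_{k+1} \cap \cdots \cap T_n$.

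Next I would prove by induction on $j$ with $k+1 \le j \le n$ that $G = H_j N$, where $H_j := T_{k+1} \cap \cdots \cap T_j$. The base case $j = k+1$ is Lemma \ref{ext stab} applied with $M = N_{k+1}$ and $\theta = \nu_{k+1}$. For the inductive step, assume $G = H_j N$ and let $\psi \in \irr(H_j)$ be the iterated Clifford correspondent of $\chi$; thus $\psi^G = \chi$ and $\psi_{N_i}$ is a multiple of $\nu_i$ for each $k+1 \le i \le j$. Mackey's formula applied to $\psi^G$, using $G = H_j N$, gives $(\psi_{H_j \cap N})^N = \chi_N = \nu$, so $\psi_{H_j \cap N}$ is irreducible. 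Furthermore, $\nu_{j+1}$ is a constituent of $\psi_{N_{j+1}}$ (since $\psi_{N_j}$ is a multiple of $\nu_j$ and $\nu_{j+1}$ is a constituent of $(\nu_j)_{N_{j+1}}$), hence of $(\psi_{H_j \cap N})_{N_{j+1}}$. Applying Lemma \ref{ext stab} inside $H_j$ with normal subgroup $H_j \cap N$, intermediate link $N_{j+1}$, and constituent $\nu_{j+1}$ yields $H_j = (H_j \cap T_{j+1})(H_j \cap N) = H_{j+1}(H_j \cap N)$. Multiplying on the right by $N$ collapses the factor $H_j \cap N$, giving $G = H_{j+1} N$. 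Taking $j = n$ produces $G = UN$.

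For the second conclusion, a final application of Mackey to $\varphi^G = \chi$ using $G = UN$ yields $(\varphi_{U \cap N})^N = \nu$; since the induction of a reducible character is reducible, $\varphi_{U \cap N}$ must itself be irreducible.

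The main obstacle I anticipate is bookkeeping: setting up the tower of iterated Clifford correspondents $\psi \in \irr(H_j)$ so that at every stage $\psi$ restricts irreducibly to $H_j \cap N$ and lies over $\nu_{j+1}$. These facts follow from Mackey together with transitivity of restriction, but they are exactly what is needed to re-invoke Lemma \ref{ext stab} inside the smaller group $H_j$. As a sanity check, one could bypass the induction entirely by applying Mackey directly to $\varphi^G = \chi$: the resulting decomposition of $\chi_N$ must collapse to a single irreducible term, yielding both $G = UN$ and the irreducibility of $\varphi_{U \cap N}$ simultaneously.
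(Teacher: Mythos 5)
Your argument is correct, but it takes a genuinely different route from the paper's. The paper proves the lemma by induction on the group: it locates the first link $\theta_l$ that is not $G$-invariant, applies Lemma \ref{ext stab} once to get $G = NT_l$, identifies $\varphi^{T_l}$ with the Clifford correspondent $\hat\chi$ of $\chi$ over $\theta_l$, uses the one-double-coset Mackey computation to see that $\hat\chi_{N \cap T_l}$ is irreducible, and then invokes the inductive hypothesis inside $T_l$, which requires knowing that $(U,\varphi)$ is also the nucleus of $\hat\chi$ with respect to the intersected chain. You instead induct along the chain inside the fixed group $G$, building partial iterated Clifford correspondents $\psi \in \irr (H_j)$ on the partial stabilizers $H_j = T_{k+1} \cap \dots \cap T_j$ and applying Lemma \ref{ext stab} inside $H_j$ at each link; this trades the paper's reliance on the compatibility of nuclei with passage to $T_l$ for the routine but necessary verification that the tower of correspondents exists. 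Two small repairs are needed there: your assertion that $\psi_{N_i}$ is a multiple of $\nu_i$ for all $k+1 \le i \le j$ is not even meaningful, since $N_i \not\le H_j$ in general --- only the case $i = j$ is needed, and that case is valid because $N_j \le H_j$ --- and you should dispose of the degenerate situations ($N = G$, or $N_{j+1} = H_j \cap N$, where the strict containments in Lemma \ref{ext stab} fail), all of which are trivial. More notably, your closing ``sanity check'' is in fact the cleanest complete proof and renders the induction, and even Lemma \ref{ext stab}, unnecessary for this particular statement: since $\varphi^G = \chi$ by Lemma 3.2 of \cite{nucl} and $\chi_N = \nu$ is irreducible, Mackey's formula writes $\nu$ as a sum over $(U,N)$-double cosets of induced characters, so there is exactly one double coset, giving $G = UN$, and then $(\varphi_{U \cap N})^N = \nu$ forces $\varphi_{U \cap N}$ to be irreducible. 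What the paper's argument (and your longer induction) buys is finer control of how the nucleus interacts with the chain, in the spirit of what Lemma \ref{prod nucl} needs later, but for the statement as given the direct Mackey argument suffices.
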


\begin{proof}
Let ${\mathcal N} = \{ G = N_0 > N_1 > \dots > N_n = 1 \}$.  Take $\theta_0 = \chi$ and for $0 \le i \le n-1$, take $\theta_{i+1}$ to be an irreducible constituent of $(\theta_i)_{N_{i+1}}$.  Let $T_i$ be the stabilizer in $G$ of $\theta_i$.  We know that $U = \cap_{i=1}^n T_i$.  Let $k$ be the integer so that $N_k = N$ and observe that for $0 \le i \le k$, we have that $\theta_i = \chi_{N_i}$ and so, $G = T_i$.  If every $\theta_i$ is invariant in $G$, then $G = U$ and $\chi = \varphi$, and the conclusion holds.

Thus, we may assume that there are some $\theta_i$'s that are not $G$-invariant.  Take $l$ to be the smallest integer so that $\theta_l$ is not $G$-invariant.  By the observation in the previous paragraph, we have that $l > k$, so $N_l < N$.   By Lemma \ref{ext stab}, we have that $G = N T_l$.   Let $\hat\chi \in \irr ({T_l \mid \theta_l})$ be the Clifford correspondent for $\chi$ with respect to $\theta_l$ and $\hat{\nu} \in \irr ({T_l \cap N \mid \theta_l})$ the Clifford correspondent for $\nu$.  We know that $\varphi^{T_l}$ is irreducible and $\theta_l$ is a constituent of $(\varphi^{T_l})_{N_l}$.  It follows that $\varphi^{T_l} = \hat\chi$.  We know that $\nu = \chi_N = ((\hat\chi)^G)_N = (\hat\chi_{N \cap T_l})^N$.  Since $\hat\chi_{N\cap T_l}$ induces irreducibly to $N$, it must irreducible, and so, $\hat\chi_{N \cap T_l} = \hat\nu$.  Set $\hat{\mathcal N}$ to be a sequence of $N_i \cap T_l$ for all $i$.  It is not difficult to see that $(U,\varphi)$ is the $\hat{\mathcal N}$-nucleus for $\hat\chi$.  By induction, $T_l = (N \cap T_l)U$ and $\varphi_{N \cap U}$ is irreducible.  We obtain $G = NT_l = N(N \cap T_l)U = NU$, and we have the desired conclusion.
\end{proof}

We use the previous lemma to compute the nucleus of a product when we have a character that restricts irreducibly to a normal subgroup with a character of the quotient.

\begin{lem} \label{prod nucl}
Let $G$ be a group and let ${\mathcal N}$ be a sequence of normal subgroups of $G$, and let $N \in {\mathcal N}$.  Suppose $\chi \in \irr (G)$ satisfies that $\chi_N = \nu$ is irreducible and $\gamma \in \irr ({G/N})$.  If $(U,\varphi)$ is the ${\mathcal N}$-nucleus for $\chi$ and $(V,\delta)$ is the ${\mathcal N}$-nucleus for $\gamma$, then $N \le V$ so $G = VU$, $\alpha = \varphi_{U \cap V} \in \irr ({U \cap V})$, $\beta = \delta_{U \cap V} \in \irr ({(U \cap V)/(U \cap N)})$, and $(U \cap V, \alpha \beta)$ is the ${\mathcal N}$-nucleus for $\chi \gamma \in \irr (G)$.
\end{lem}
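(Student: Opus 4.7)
I would start by showing $N \le V$. Since $\gamma \in \irr(G/N)$ has $N$ in its kernel, each nucleus constituent $\gamma_i$ of $\gamma$ on $N_i$ has $N$ in its kernel when $N \le N_i$ (i.e., $i \le k$ where $N_k = N$), and $\gamma_i = 1_{N_i}$ for $i \ge k$. For any $n \in N$ and $x \in N_i$ (with $i \le k$), $n^{-1}xn \in xN$, so $\gamma_i^n(x) = \gamma_i(x)$, giving $N \le T_i = I_G(\gamma_i)$; hence $N \le V$. Combined with $G = NU$ from Lemma~\ref{ext nucl}, Dedekind's modular law yields $V = N(U \cap V)$, so $G = VU$ and the second isomorphism theorem gives $(U \cap V)/(U \cap N) \cong V/N$. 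Since $N \lhd G$ and $N \le V$, Mackey's formula presents $\gamma|_N = (\delta^G)|_N$ as a sum of conjugates of $\delta|_N$; because $\gamma|_N = \gamma(1) \cdot 1_N$, it follows that $\delta|_N = \delta(1) \cdot 1_N$, so $N \le \ker\delta$.

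Next I would verify irreducibility and the induction formula. Lemma~\ref{ext nucl} gives $\varphi_{U \cap N} \in \irr(U \cap N)$, and since $U \cap N \le U \cap V$, the restriction $\alpha = \varphi_{U \cap V}$ is itself irreducible. Under the isomorphism $V/N \cong (U \cap V)/(U \cap N)$, $\delta$ (factoring through $V/N$) corresponds to $\beta = \delta_{U \cap V}$ (factoring through the quotient), giving $\beta \in \irr((U \cap V)/(U \cap N))$. Since $\alpha_{U \cap N}$ is irreducible and $\beta$ is trivial on $U \cap N$, Gallagher's theorem in $U \cap V$ (with normal subgroup $U \cap N$) gives $\alpha\beta \in \irr(U \cap V)$. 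For the induction formula, since $G = UV$ admits only one $(U,V)$-double coset, Mackey gives $\gamma_U = (\delta^G)_U = (\delta_{U \cap V})^U = \beta^U$; two applications of the projection formula then produce
$$\chi\gamma = \varphi^G \cdot \delta^G = (\varphi \cdot \beta^U)^G = ((\varphi_{U \cap V} \cdot \beta)^U)^G = (\alpha\beta)^G.$$

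To identify $(U \cap V, \alpha\beta)$ as the $\mathcal{N}$-nucleus of $\chi\gamma$, I would choose the constituent sequence for $\chi\gamma$ as follows. For $i \le k$, Gallagher's theorem inside $N_i$ (with $N \lhd N_i$ and $\chi_{N_i}|_N = \nu$ irreducible) shows the irreducible constituents of $(\chi\gamma)_{N_i} = \chi_{N_i} \gamma_{N_i}$ are exactly the products $\chi_{N_i} \gamma_{i,j}$ where $\gamma_{i,j}$ ranges over constituents of $\gamma_{N_i}$; pick $(\chi\gamma)_i = \chi_{N_i}\gamma_i$. For $i > k$, $\gamma_{N_i} = \gamma(1) \cdot 1_{N_i}$ makes $(\chi\gamma)_{N_i} = \gamma(1)\chi_{N_i}$, and pick $(\chi\gamma)_i$ to be the constituent $\chi_i$ from $\chi$'s own nucleus sequence. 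Since $\chi_{N_i}$ is $G$-invariant for $i \le k$ and the Gallagher bijection preserves stabilizers, $I_G((\chi\gamma)_i) = T_i$ for $i \le k$ and $= S_i$ for $i > k$; as $T_i = G$ for $i \ge k$ and $S_i = G$ for $i \le k$, the total stabilizer is $W = V \cap U$. Iterating the Clifford correspondence, at each step $i \le k$ the intermediate correspondent in $V_i = T_1 \cap \cdots \cap T_i$ factors (via the projection formula and Gallagher) as $\chi_{V_i} \cdot \psi_i^\gamma$, where $\psi_i^\gamma$ is the $i$-th Clifford correspondent for $\gamma$, yielding $\chi_V \cdot \delta$ at step $k$; for $i > k$, $\delta$ restricts to a scalar on $N_i$, so $\delta$ persists as a factor while the $\chi_V$-part continues to refine along $\chi$'s nucleus construction, culminating in $\alpha\beta$ on $U \cap V$. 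Uniqueness in Lemma~3.2 of~\cite{nucl} then identifies $(U \cap V, \alpha\beta)$ as the nucleus. The main obstacle is this last identification: verifying at each step that the intermediate character factors as a product of a $\chi$-type and $\delta$-type factor and that this product is irreducible --- requiring Gallagher's theorem and the projection formula above $N$, and the fact that $\delta$ becomes scalar on each $N_i$ below $N$.
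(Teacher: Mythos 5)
Your proposal is correct and takes essentially the same route as the paper: show $N \le V$, use Lemma \ref{ext nucl} to get $G = NU = VU$ with $\alpha$, $\beta$ and (via Gallagher) $\alpha\beta$, $\chi\gamma$ irreducible, then take the termwise products of the two nucleus sequences as a constituent sequence for $\chi\gamma$, observe each product is irreducible with stabilizer $T_i \cap S_i$, and conclude the nucleus is $(U \cap V, \alpha\beta)$. Your additional verifications --- the Mackey argument that $N \le \ker\delta$, the projection-formula computation $(\alpha\beta)^G = \chi\gamma$, and the iterated Clifford-correspondent factoring at the end --- merely fill in steps the paper's proof compresses into ``it is not difficult to see/determine.''
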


\begin{proof}
Let ${\mathcal N} = \{ G = N_0 > N_1 > \dots > N_n = 1 \}$.   Let $k$ be the integer so that $N_k = N$ and observe that for $0 \le i \le k$, we have $N_k \le \ker {\gamma}$.  It is not difficult to see that $N \le V$.  By Lemma \ref{ext nucl}, we have $G  = NU$ and $\varphi_{N \cap U}$ is irreducible, so $G = VU$ and $\alpha = \varphi_{U \cap V}$ is irreducible.  Since $N \le V$, we have that $\beta = \delta_{V \cap U} \in \irr ({(U \cap V)/(U \cap N)})$. By Gallagher's theorem, we know that $\chi\gamma$ and $\alpha \beta$ are irreducible.

Take $\theta_0 = \chi$ and for $0 \le i \le n-1$, take $\theta_{i+1}$ to be an irreducible constituent of $(\theta_i)_{N_{i+1}}$ so that $\theta_k = \nu$.  Let $T_i$ be the stabilizer in $G$ of $\theta_i$.  Let $\sigma_0 = \gamma$, and for $0 \le i \le n-1$, let $\sigma_{i+1}$ be an irreducible constituent of $(\sigma_i)_{N_{i+1}}$.  For each $i$, set $S_i$ to be the stabilizer of $\sigma_i$ in $G$.  We know that $U = \cap_{i=1}^n T_i$ and $V = \cap_{i=1}^n S_i$.  Note for $i \le k$, we have $\sigma_i \in \irr (N_i/N)$ and $(\theta_i)_N = \nu$; so Gallagher's theorem applies and $\theta_i \sigma_i$ will be irreducible.  For $i \ge k$, we see that $\sigma_i = 1$, and again, we have that $\theta_i \sigma_i$ is irreducible.  Therefore, $\theta_i \sigma_i$ is irreducible for all $i$.  It is not difficult to see that its stabilizer will be $T_i \cap S_i$.  It follows that the subgroup for the ${\mathcal N}$-nucleus for $\chi \gamma$ is $\cap_{i=1}^n (T_i \cap S_i) = (\cap_{i=1}^n T_i) \cap (\cap_{i=1}^n S_i) = U \cap V$.  It is not difficult to determine that $\alpha \beta$ will be the character in $\irr ({U \cap V})$ so that $(U \cap V,\alpha\beta)$ is the ${\mathcal N}$-nucleus for $\chi \gamma$.
\end{proof}

Finally, we come to the version of Gallagher's theorem for characters in ${\rm B}_\pi (G:{\mathcal N})$.

\begin{lem} \label{bij bpi nucl}
Let $\pi$ be a set of primes, let $G$ be a $\pi$-separable group, let $N$ be a normal subgroup of $G$, and let ${\mathcal N}$ be a sequence of normal subgroups so that $N \in {\mathcal N}$ and all the factors are $\pi$ or $\pi'$-groups.   Suppose $\chi \in {\rm B}_{\pi} (G : {\mathcal N})$ so that $\nu = \chi_N \in \irr (N)$.  Then the map $\gamma \mapsto \gamma\chi$ is a bijection from
$${\rm B}_{\pi} (G/N : {\mathcal N}) \rightarrow {\rm B}_{\pi} (G \mid \nu : {\mathcal N}).$$
\end{lem}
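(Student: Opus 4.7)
The plan is to reduce the desired bijection to a statement about nucleus characters, using Lemmas \ref{ext nucl} and \ref{prod nucl} together with Lemma \ref{pi spec prod}. Ordinary Gallagher's theorem (Corollary 6.17 of \cite{Isaacs1976}) already gives a bijection $\gamma \mapsto \gamma\chi$ from $\irr(G/N)$ onto $\irr(G \mid \nu)$, so injectivity is automatic; the remaining task is to verify that this map carries ${\rm B}_{\pi}(G/N : {\mathcal N})$ bijectively onto ${\rm B}_{\pi}(G : {\mathcal N}) \cap \irr(G \mid \nu)$.

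Let $(U, \varphi)$ be the ${\mathcal N}$-nucleus of $\chi$, so $\varphi$ is $\pi$-special by hypothesis. By Lemma \ref{ext nucl}, $G = NU$ and $\psi := \varphi_{N \cap U}$ is irreducible. Given $\gamma \in \irr(G/N)$ with ${\mathcal N}$-nucleus $(V, \delta)$, the fact that $N \le \ker \gamma$ forces $N \le V$ and $N \le \ker \delta$; Dedekind's modular law applied to $N \le V \le G = NU$ gives $V = N(U \cap V)$, so $V/N \cong (U \cap V)/(N \cap U)$, and under this isomorphism $\delta$ (viewed on $V/N$) corresponds to $\beta := \delta_{U \cap V}$ (viewed on $(U \cap V)/(N \cap U)$). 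Lemma \ref{prod nucl} then identifies the ${\mathcal N}$-nucleus of $\chi\gamma$ as $(U \cap V, \alpha\beta)$ with $\alpha := \varphi_{U \cap V}$. Hence the desired bijection amounts to the equivalence: $\alpha\beta$ is $\pi$-special on $U \cap V$ if and only if $\delta$ is $\pi$-special on $V$.

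For the forward direction, assume $\delta$ is $\pi$-special; then the character it induces on $V/N$ is $\pi$-special, and transferring through the isomorphism and inflating shows that $\beta$ is $\pi$-special on $U \cap V$ (since $\pi$-specialness is preserved under isomorphism and inflation). Once one verifies that $\alpha = \varphi_{U \cap V}$ is also $\pi$-special, Lemma \ref{pi spec prod} completes the forward direction. For the reverse, assume $\alpha\beta$ is $\pi$-special; since $\delta$ is $\pi$-factored by Lemma 4.1 of \cite{nucl}, $\beta$ is $\pi$-factored on $U \cap V$ (by the same isomorphism), and granting $\alpha$ is $\pi$-special, the uniqueness of the $\pi$-factorization of $\alpha\beta$ forces the $\pi'$-part of $\beta$ to be trivial, so $\beta$ is $\pi$-special, whence $\delta$ is $\pi$-special on $V$.

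The main obstacle is showing that $\alpha = \varphi_{U \cap V}$ is $\pi$-special. The degree $\alpha(1) = \varphi(1)$ is a $\pi$-number automatically, but the determinantal condition is delicate, since subnormal subgroups of $U \cap V$ need not be subnormal in $U$. My approach would exploit that $\alpha$ sits above the irreducible $\pi$-special character $\psi$ of $N \cap U \lhd U$, and that the quotient $(U \cap V)/(N \cap U)$ corresponds to the nucleus subgroup $V/N$ under the isomorphism $U/(N \cap U) \cong G/N$; combining Gallagher-type extension arguments over $\psi$ with the hypothesis that each factor of ${\mathcal N}$ is a $\pi$- or $\pi'$-group should let one propagate the $\pi$-specialness of $\varphi$ down to its restriction $\alpha$, most likely by an induction along the chain ${\mathcal N}$ that reduces to the cases handled by the standard $\pi$-special extension theory of Gajendragadkar.
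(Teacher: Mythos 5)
Your route is essentially the paper's: reduce via ordinary Gallagher's theorem to the membership statement that $\gamma \in {\rm B}_\pi (G/N : {\mathcal N})$ if and only if $\gamma\chi \in {\rm B}_\pi (G : {\mathcal N})$; use Lemmas \ref{ext nucl} and \ref{prod nucl} to identify the ${\mathcal N}$-nucleus of $\gamma\chi$ as $(U \cap V, \alpha\beta)$ with $\alpha = \varphi_{U \cap V}$ and $\beta = \delta_{U \cap V}$; transfer the factorization $\delta = \eta\zeta$ ($\eta$ $\pi$-special, $\zeta$ $\pi'$-special) down to $U \cap V$ using $N \le \ker\delta$ and $V = N(U \cap V)$; and conclude with Lemma \ref{pi spec prod} together with uniqueness of $\pi$-factorization that $\alpha\beta$ is $\pi$-special precisely when the $\pi'$-part is trivial, i.e., precisely when $\gamma \in {\rm B}_\pi (G/N : {\mathcal N})$. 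All of this tracks the published proof step for step, including the observation that $N \le \ker\delta$ makes the factors of $\beta$ inherit $\pi$- and $\pi'$-specialness from those of $\delta$ through the isomorphism $V/N \cong (U\cap V)/(N\cap U)$.

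The genuine gap is exactly the step you flag as ``the main obstacle'': you never prove that $\alpha = \varphi_{U\cap V}$ is $\pi$-special, and both directions of your argument depend on it --- without it you cannot invoke Lemma \ref{pi spec prod} to get that $\alpha\tau$ is $\pi$-special, and the reverse direction's cancellation of the $\pi'$-part out of $\alpha\beta = (\alpha\tau)\sigma$ collapses as well. You have correctly isolated the crux (the paper's proof records this assertion, in the notation of Lemma \ref{prod nucl}, with essentially no supporting argument), but your proposed repair is only a heuristic and is not obviously viable as described: the definition of $\pi$-speciality only controls constituents on subnormal subgroups of $U$, whereas $U \cap V$ is in general neither normal nor subnormal in $U$, and its index $|U : U \cap V| = |G : V|$ need not be a $\pi'$-number, so neither the defining condition for $\varphi$ nor the standard Gajendragadkar--Isaacs restriction theorems (restriction to subgroups of $\pi'$-index, or to Hall subgroups) apply directly; an induction ``along the chain'' faces the same obstruction, since the relevant stabilizer subgroups are not members of ${\mathcal N}$. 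What you do have available --- $N \cap U \lhd U$ with $\psi = \varphi_{N\cap U}$ irreducible (hence $\pi$-special) and $N \cap U \le U \cap V \le U$ --- is where an actual argument would have to start, and until that argument is written down the proposal is incomplete at the one point that carries the real content of the lemma.
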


\begin{proof}
By Gallagher's theorem, we know that $\gamma \mapsto \gamma \chi$ is a bijection from $\irr (G/N)$ $\rightarrow \irr (G \mid \nu)$.  Thus, it suffices to show that $\gamma \in {\rm B}_\pi (G/N : {\mathcal N})$ if and only if $\gamma \chi \in {\rm B}_{\pi} (G : {\mathcal N})$.

Let $(U,\varphi)$ be the ${\mathcal N}$-nucleus for $\chi$.  Since $\chi \in {\rm B}_{\pi} (G : {\mathcal N})$, we know that $\varphi$ is $\pi$-special.   Consider a character $\gamma \in \irr (G/N)$ and let $(V,\delta)$ be the ${\mathcal N}$-nucleus for $\gamma$.  We can write $\delta = \eta \zeta$ where $\eta$ is $\pi$-special and $\zeta$ is $\pi'$-special.  We know that $\gamma \in {\rm B}_\pi (G/N : {\mathcal N})$ if and only if $\zeta = 1$.  Since $\delta_{V \cap U}$ is irreducible, we see that $\tau = \eta_{V \cap U}$ and $\sigma = \zeta_{V \cap U}$ are irreducible.  Also, $\alpha = \gamma_{V \cap U}$ and $\tau$ will be $\pi$-special and $\sigma$ is $\pi'$-special.  Since $N \le \ker {\delta} = \ker {\eta} \cap \ker {\zeta}$, we see that $\sigma = 1$ if and only if $\zeta = 1$.

Since $\alpha \beta$ is irreducible and $\alpha \beta = \alpha \tau \sigma$ implies that $\alpha \tau$ is irreducible.  By Lemma \ref{pi spec prod}, $\alpha \tau$ is $\pi$-special.  Thus, $\gamma \chi \in {\rm B}_\pi (G : {\mathcal N})$ if and only if $\sigma = 1$.  We have already seen that $\sigma = 1$ if and only if $\zeta = 1$, and we have noted that $\zeta = 1$ if and only if $\gamma \in {\rm B}_\pi  (G/N : {\mathcal N})$.  This proves the result.
\end{proof}

We use the previous lemma to obtain a version of Gallagher's theorem for $\pi$-partial characters.


\begin{proof}[Proof of Theorem \ref{ipi Gall,gen}]
Take ${\mathcal N}$ to be a normal series for $G$ that contains $N$ so that the factor groups are $\pi$-groups or $\pi'$-groups.  Take $\chi \in {\rm B}_\pi (G : {\mathcal N})$ so that $\chi^o = \zeta$.  We know that $(\chi^o)_N = \zeta_N = \xi$ and so, $\chi_N$ is irreducible.  Take $\nu = \chi_N$.  We have bijections from ${\rm B}_\pi (G/N : {\mathcal N}) \rightarrow {\rm I}_{\pi} (G/N)$ and ${\rm B}_\pi (G \mid \nu : {\mathcal N}) \rightarrow {\rm I}_\pi (G \mid \xi)$.  Applying Lemma \ref{bij bpi nucl}, we have the bijection from ${\rm B}_\pi (G/N : {\mathcal N}) \rightarrow {\rm B}_\pi (G \mid \nu : {\mathcal N})$.  Composing these bijcetions appropriately, we obtain the desired bijection.
\end{proof}

\section{$\pi$-versions for a partial converse of Gallagher's theorem}

We now obtain a partial converse for the $\pi$-version of the converse of Gallagher's theorem.  We first prove the result for ${\rm I}_\pi$-characters.  Note that we do not need to assume $2 \in \pi$ or $|G|$ is odd to prove this direction.  We also note that a full converse of the $\pi$-versions of Gallagher's theorem is not possible, since if $|G:N|$ is a $\pi'$-number and $\eta \in {\rm I}_\pi (N)$, then we know that $\eta^G$ has a unique irreducible constituent $\zeta$ in ${\rm I}_{\pi} (G)$ in all cases and the principal partial character is the only partial character in ${\rm I}_\pi (G/N)$, so the map from ${\rm I}_\pi (G/N)$ to ${\rm I}_\pi ({G \mid \eta})$ will yield only $\zeta$ in all cases, and we will not be able to distinguish the case when $\eta$ extends to $G$.  Thus, some hypothesis on $G/N$ is needed.  Whether we can weaken it from being a $\pi$-group, we have not determined.


\begin{proof}[Proof of Theorem \ref{ipi conv}]
Let $\chi, \beta_1, \dots, \beta_n \in {\rm B}_\pi (G)$ so that $\chi^o = \zeta$ and $(\beta_i)^o = \kappa_i$ for $i = 1, \dots, n$.  Observe that $(\beta_i \chi)^o = \kappa_i \zeta$.  It follows that $\beta_1 \chi, \dots, \beta_n \chi$ are distinct and irreducible.  Hence, the hypotheses of Theorem \ref{main1} are met, and by that theorem, we see that $\chi_N$ is irreducible.  It follows that $\chi_N \in {\rm B}_\pi (N)$, and we conclude that $\zeta_N = (\chi^o)_N = (\chi_N)^o$ is irreducible, proving the theorem.
\end{proof}

In fact, the partial converse of Gallagher's theorem will apply for sets of lifts of the $\pi$-partial characters.  To see this we make the following definition we again only need that we have subsets of $\irr (G)$ and $\irr (N)$ that are in bijection with ${\rm I}_\pi (G)$ and ${\rm I}_\pi (N)$, and that irreducible constituents lie in this set.  Thus, we prove the result in this generality so that it could be applied to any set of lifts.  

\begin{cor}\label{lifts conv}
Let $\pi$ be a set of primes, let $G$ be a $\pi$-separable group, and let $N$ be a normal subgroup of $G$ so that $G/N$ is a $\pi$-group.  Assume there exist subsets $X_\pi (G) \subseteq \irr (G)$ and $X_\pi (N) \subseteq \irr (N)$ so that the map $\theta \mapsto \theta^o$ is a bijection from $X_\pi (G)$ to ${\rm I}_\pi (G)$ and from $X_\pi (N)$ to ${\rm I}_\pi (N)$, and assume for $\chi \in X_\pi (G)$ that the constituents of $\chi_N$ lie in $X_\pi (N)$.  If there exist $\chi \in X_\pi (G)$ and $\irr (G/N) = \{ \gamma_1 = 1, \dots, \gamma_n \}$ that satisfy $(\gamma_1 \chi)^o, \dots, (\gamma_n \chi)^o$ are irreducible and distinct, then $\chi_N \in X_\pi (N)$.
\end{cor}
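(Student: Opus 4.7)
The plan is to reduce this corollary to Theorem~\ref{ipi conv}, applied to $\zeta = \chi^o \in {\rm I}_\pi(G)$. For each $i$ set $\kappa_i = \gamma_i^o$, where $\gamma_i$ is viewed as a character of $G$ by inflation from $G/N$. Because $G/N$ is a $\pi$-group, we have $(G/N)^o = G/N$, so the map $\gamma \mapsto \gamma^o$ is the identity on $\irr(G/N)$ and gives ${\rm I}_\pi(G/N) = \irr(G/N) = \{\kappa_1,\dots,\kappa_n\}$. Multiplicativity of the restriction map $(-)^o$ yields $(\gamma_i \chi)^o = \kappa_i \zeta$, so the assumption translates exactly into the statement that $\kappa_1\zeta,\dots,\kappa_n\zeta$ are distinct and irreducible elements of ${\rm I}_\pi(G)$.

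Applying Theorem~\ref{ipi conv} to $\zeta$, I would then conclude that $\zeta_N$ is irreducible in ${\rm I}_\pi(N)$. To pass back to the ordinary character $\chi_N$, decompose $\chi_N = \sum_{j=1}^{k} e_j \theta_j$ into its distinct irreducible constituents $\theta_j$ with multiplicities $e_j \ge 1$. By the hypothesis on $X_\pi$, each $\theta_j$ lies in $X_\pi(N)$, so each $\theta_j^o$ lies in ${\rm I}_\pi(N)$ (hence is irreducible), and bijectivity of $X_\pi(N) \to {\rm I}_\pi(N)$ forces the $\theta_j^o$ to be pairwise distinct. Hence
\[
\zeta_N = (\chi_N)^o = \sum_{j=1}^{k} e_j \theta_j^o
\]
expresses $\zeta_N$ as a sum of pairwise distinct irreducible $\pi$-partial characters with positive integer coefficients, and the irreducibility of $\zeta_N$ in ${\rm I}_\pi(N)$ forces $k = 1$ and $e_1 = 1$. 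Thus $\chi_N = \theta_1 \in X_\pi(N)$, as required.

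The main (really only) obstacle is the bookkeeping in the first step: recognizing that the ordinary irreducible characters $\gamma_i$ of $G/N$ are simultaneously the full list of irreducible $\pi$-partial characters of $G/N$. This is exactly where the hypothesis that $G/N$ is a $\pi$-group is used, and it is what permits the hypotheses of Corollary~\ref{lifts conv} to feed directly into those of Theorem~\ref{ipi conv}. Once this identification is in place, the remainder is a routine combination of the already-proved Theorem~\ref{ipi conv} with the defining properties of the lift set $X_\pi$.
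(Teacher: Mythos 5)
Your proof is correct, but it takes a different route than the paper does. The paper's argument is a one-step reduction to Theorem~\ref{main1}: since each $(\gamma_i\chi)^o$ is an irreducible $\pi$-partial character, each $\gamma_i\chi$ must already be irreducible as an ordinary character (a reducible character restricts to a proper sum of $\pi$-partial characters), and distinctness of the restrictions forces distinctness of the $\gamma_i\chi$; Theorem~\ref{main1} then gives $\chi_N \in \irr(N)$, and the hypothesis that the constituents of $\chi_N$ lie in $X_\pi(N)$ immediately yields $\chi_N \in X_\pi(N)$. You instead route through Theorem~\ref{ipi conv}, which requires the observation that ${\rm I}_\pi(G/N) = \irr(G/N)$ because $G/N$ is a $\pi$-group; this gives only that $(\chi_N)^o$ is irreducible in ${\rm I}_\pi(N)$, so you need the extra decomposition argument $\chi_N = \sum_j e_j\theta_j$ with $\theta_j \in X_\pi(N)$ to climb back to $\chi_N \in X_\pi(N)$ (and in fact distinctness of the $\theta_j^o$ is not needed there: any expression of $(\chi_N)^o$ as a sum of two or more $\pi$-partial characters already contradicts irreducibility). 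Both arguments are valid; the paper's is shorter and, as written, never uses the assumption that $G/N$ is a $\pi$-group, whereas your reduction leans on that assumption essentially, and since Theorem~\ref{ipi conv} is itself proved from Theorem~\ref{main1}, your route is the paper's argument at one remove.
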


\begin{proof}
Observe that the hypotheses of Theorem \ref{main1} are met, and so $\chi_N$ is irreducible.  The result now follows.
\end{proof}

Applying Corollary \ref{lifts conv}, if $G$ is $\pi$-separable, $N$ is a normal subgroup so that $G/N$ is a $\pi$-group  and either $\chi \in {\rm B}_\pi (G)$ or $\chi \in {\rm B}_\pi (G: {\mathcal N})$ for an appropriate normal series ${\mathcal N}$ so that the map $\beta \mapsto \chi \beta$ is a injection from $\irr (G/N)$ to $\irr (G)$, then $\chi_N$ will lie in ${\rm B}_\pi (G)$ or ${\rm B}_\pi (G: {\mathcal N}^*)$, respectively.  This yields a partial converse for the versions of Gallagher's theorem for ${\rm B}_\pi$-characters and ${\rm B}_\pi (G : {\mathcal N})$-characters.





\section*{Acknowledgments}
The first author thanks support of the program of Henan University of Technology (2024PYJH019),
the program of Foreign Experts of Henan Province (HNGD2024020),
and the Natural Science Foundation of Henan Province (252300421983).



\begin{thebibliography}{100}

\bibitem{cos} J.~P.~Cossey, Vertices of $\pi$-irreducible characters of groups of odd order, {\it Comm. Algebra} {\bf 36} (2008), 3972-3979.

\bibitem{gajen} D. Gajendragadkar, A characteristic class of characters of finite $\pi$-separable groups, {\it J. Algebra} {\bf 59} (1979), 237-259.

\bibitem{Isaacs1976}
I. M. Isaacs, {\it Character Theory of Finite Groups}, Academic Press, New York, 1976.

\bibitem{pisep} I.~M.~Isaacs, Characters of $\pi$-separable groups, {\it J. Algebra} {\bf 86} (1984), 98-128.

\bibitem{dpi}
I. M. Isaacs, Induction and Restriction of $\pi$--Special Characters, {\it Canad. J. Math.} {\bf 38}  (1986), 576-604.

\bibitem{arc} I.~M.~Isaacs,  Characters of solvable groups.  \lq\lq The Arcata Conference on Representations of Finite Groups (Arcata, Calif., 1986),\rq\rq\ 103-109, {\it Proc. Sympos. Pure Math., 47, Part 1}, Amer. Math. Soc., Providence, RI, 1987.

\bibitem{pipart}  I.~M.~Isaacs, Partial characters of $\pi$-separable groups, {\it Prog. in Math.} {\bf 95} (1991), 273-287.

\bibitem{odd} I.~M.~Isaacs, Characters and Hall subgroups of groups of odd order, {\it J. Algebra} {\bf 157}  (1993), 548-561.

\bibitem{aust} I.~M.~Isaacs, The $\pi$-character theory of solvable groups, {\it J. Austral. Math. Soc. Ser. A} {\bf 57} (1994), 81-102.

\bibitem{nato} I.~M.~Isaacs, Characters and sets of primes for solvable groups in NATO Adv. Sci. Inst. Ser. C: Math. Phys. Sci., 471 Kluwer Academic Publishers Group, Dordrecht, 1995, 347--376.

\bibitem{Isaacs2018}
I. M. Isaacs, {\it Characters of Solvable Groups}, American Mathematimathcal Society, Providence, R.I., 2018.

\bibitem{nucl}
M. L. Lewis, Obtaining nuclei from chains of normal subgroups, {\it J. Algebra Appl.} {\bf 5} (2006), 215-229.

\bibitem{Navarro1998}
G. Navarro, {\it Characters and Blocks of Finite Groups}, Cambridge
University Press, Cambridge, 1998.

\bibitem{nav} G. Navarro, Vertices for characters of p-solvable groups, {\it Trans. AMS} {\bf 354} (2002), 2759-2773.

\end{thebibliography}
\end{document}